\begin{document}

\title[\hfilneg   ]{A Coupled Generalized Korteweg-de Vries System Driven by White Noise}
	\date{}
	\author{A.Boukarou$^{*}$}
	
	\address{Aissa Boukarou: University of Sciences and Technology Houari Boumediene, BP 32, El-Alia,
		Bab Ezzouar, 16111, Algiers, Algeria} 
	\email{boukarouaissa@gmail.com}

	\subjclass[2010]{60H15, 49K40, 60H40}
	\keywords{stochastic, white Noise, KdV equation, Bourgain space}
\begin{abstract}
In this paper, we investigate the Cauchy problem for the coupled generalized Korteweg-de Vries system driven by white noise. We prove local well-posedness for data in $ H^{s} \times H^{s},$ with $ s>1/2$. The key ingredients that we used in this paper are multilinear estimates in Bourgain spaces, the Itô formula and a fixed point argument. Our result improves the local well-posedness result of Gomes and Pastor \cite{gomes2021solitary}.

\end{abstract}
	\maketitle
	\numberwithin{equation}{section}
\newtheorem{theorem}{Theorem}[section]
\newtheorem{lemma}[theorem]{Lemma}
\newtheorem{proposition}[theorem]{Proposition}
\newtheorem{remark}[theorem]{Remark}
\newtheorem{definition}[theorem]{Definition}
\allowdisplaybreaks

\section{Introduction}
The dispersive nonlinear systems appear in many physical applications. They can be used, for example, to model wave propagation on the surface of water or to describe the interaction of nonlinear internal waves. In this study, we focus on Hamiltonian systems.
\begin{equation}\label{p0}
\left\{\begin{array}{l}
	\partial_t
	 \phi+\partial_x^3 \phi+\mu \partial_x\left(N_\phi(\phi, \varphi)\right)=0 \\ \\
	\partial_t \varphi+\alpha\partial_x^3 \varphi+\mu \partial_x\left(N_\varphi(\phi, \varphi)\right)=0,\quad 0 < \alpha < 1,\quad (x,t) \in \mathbb{R}^2 
\end{array}\right.
\end{equation}

where $\phi = \phi(x,t)$ and $\varphi = \varphi(x,t)$ are real valued functions, $N$ is a smooth function, $N_\phi$ and $N_\varphi$ denote the derivative of $N$ with respect to $\phi$ and $\varphi$, respectively, and $\mu$ is real constant which we normalize to be $\pm 1$.\\
This system generalizes the classical Korteweg–de Vries equation \cite{korteweg1895xli}, originally introduced to model shallow water waves.

The question of the correct local and global definition of the initial value problem associated with \eqref{p0} has been a major topic in the theory of dispersive equations in the last few years. We shall briefly recall some results of interest to us that are available in the current literature.

 For $N_\phi(\phi, \varphi)=\varphi^2, N_\varphi(\phi, \varphi)=\phi \varphi$, the system \eqref{p0} is a particular case of the Majda–Biello system \cite{Majda}, which models the nonlinear interaction of long-wavelength equatorial Rossby waves and barotropic Rossby waves. The well-posedness problem associated with initial value problem \eqref{p0} in this case was studied by many authors. Tadahiro Oh \cite{ref1} proved that the initial value problem is locally and globally well-posed in both cases
\begin{itemize}
	\item In periodic case is locally  well-posed for  $H^s(\mathbb{T}_\lambda)\times H^s(\mathbb{T}_\lambda), s \geq  s*$, where $s*\in(1/2,1])$ and  globally well-posed in $H^1(\mathbb{T}_\lambda)\times H^1(\mathbb{T}_\lambda)$ due to the conservation of the Hamiltonian.
	\item In nonperiodic case is locally  well-posed for $H^s(\mathbb{R})\times H^s(\mathbb{R})$, where $s \geq  0$ and globally well-posed in $L^2(\mathbb{R})\times L^2(\mathbb{R})$ due to the $L^2$ conservation law.
\end{itemize}
  
For $N_\phi(\phi, \varphi)=\phi \varphi^2, N_\varphi(\phi, \varphi)=\phi^2\varphi$, the system \eqref{p0}  reduces to a special case of a broad class of nonlinear evolution equations considered by Ablowiz et al. \cite{Ablowitz} in the context of inverse scattering. In this case, questions of proper positioning as well as the existence and stability of solitary waves for this system have been extensively studied in the literature.  Carvajal and Panthee \cite{ref3} proved that the IVP \eqref{p0}  is locally well posed for given data $H^s(\mathbb{R})\times H^s(\mathbb{R})$, where $s >  -1/2$. 

For $N_\phi(\phi, \varphi)$ and $N_\varphi(\phi, \varphi)$ given by $$N_\phi(\phi, \varphi)=A \phi^{2 k+1}+B \phi^k \varphi^{k+1}+\frac{k+2}{k} C \phi^{k+1} \varphi^k+D \phi^{k-1} \varphi^{k+2}$$$$N_\varphi(\phi, \varphi)= A \varphi^{2 k+1}+B \varphi^k \phi^{k+1}+\frac{k+2}{k} D \varphi^{k+1} \phi^k+C \varphi^{k-1} \phi^{k+2}$$ 
Gomes and Pastor \cite{gomes2021solitary} proved that system is locally well posed for given data $H^s(\mathbb{R})\times H^s(\mathbb{R})$, where $s \geq 1$ and globally well-posed in $H^1(\mathbb{R}) \times H^1(\mathbb{R})$.

In \cite{gru}, Grujić and Kalisch establish local well-posedness for the initial-value problem $$\partial_t \phi+\partial_x^3 \phi+\phi^k\partial_x\phi=0$$ for initial data in $G^{\sigma, s}$, where $s \geq 0$ for $p=1$ and $s>3/2$ for $p \geq 2$. The solution persists in $C\left([0, T], G^{\sigma, s}\right)$ for some $T>0$, remaining analytic on the same fixed strip of width $2 \sigma$ throughout the time interval $[0, T]$.

In \cite{ref6}, de Bouard and Debussche studied the stochastic KdV equation with white noise forcing. They proved existence and uniqueness of solutions in $H^1 (\mathbb{R})$ for additive noise, and existence of martingale solutions in $L^2 (\mathbb{R})$ for multiplicative noise (e.g.,\cite{10,ref5}).

Boukarou et al \cite{ref10} proved that, the white noise driven KdV-type Boussinesq system
\begin{equation*} 
	\left\{
	\begin{array}{ll}
		\partial_t \varphi + \partial_x \phi + \partial^{3}_x \phi  +  \partial_x (\phi \varphi) =\alpha_1B_1(x, t)\\ \\
		\partial_t \phi + \partial_x \varphi + \partial^{3}_x \varphi  + \phi \partial_x \phi = \alpha_2B_2(x, t),
	\end{array}
	\right.
\end{equation*}
is local well-posedness for given data $H^s(\mathbb{R})\times H^s(\mathbb{R})$, where $s \geq 0$ and global well-posedness in $L^2\left(\Omega, L^2(\mathbb{R})\right)\times L^2\left(\Omega, L^2(\mathbb{R})\right)$.

We are interested in stochastic system with the Hamiltonian structure, which is a coupled system of gKdV equations given by \eqref{p1}-\eqref{p1.1}.
\begin{equation}\label{p1}
	\left\{\begin{array}{l}
		\partial_t \phi+\partial_x^3 \phi+\partial_x(N_\phi(\phi, \varphi))=\Xi \dfrac{\partial^{2} \mathbb{B}}{\partial t\partial x} \\ \\
		\partial_t \varphi+\alpha\partial_x^3 \varphi+\partial_x(N_\varphi(\phi, \varphi))=\Xi \dfrac{\partial^{2} \mathbb{B}}{\partial t\partial x},\qquad 0 < \alpha < 1,  \\ \\
		(\phi(x, 0), \varphi(x, 0))=\left(\phi_0(x), \varphi_0(x)\right) \in H^s(\mathbb{R})\times  H^s(\mathbb{R}),
	\end{array}\right.
\end{equation}
with 
\begin{equation}\label{p1.1}
	\left\{\begin{array}{l}
		N_\phi(\phi, \varphi)=A \phi^{2 k+1}+B \phi^k \varphi^{k+1}+\frac{k+2}{k} C \phi^{k+1} \varphi^k+D \phi^{k-1} \varphi^{k+2} \\ \\
		N_\varphi(\phi, \varphi)=A \varphi^{2 k+1}+B \varphi^k \phi^{k+1}+\frac{k+2}{k} D \varphi^{k+1} \phi^k+C \varphi^{k-1} \phi^{k+2},
	\end{array}\right.
\end{equation}
and $k \geq 1$ a natural number and $A, B, C$ and $D$ nonnegative real constants. Where, $\phi = \phi(x,t)$ and $\varphi = \varphi(x,t)$ are random processes defined for $(x,t) \in \mathbb{R}\times \mathbb{R}^{+}$, $\Xi$ is a linear operator, and $\mathbb{B}$ is a two-parameter Brownian motion on $\mathbb{R}\times \mathbb{R}^{+}$.

We will improve the local well-posedness result of  Gomes and  Pastor \cite{gomes2021solitary}, we will use the methods in \cite{gru,ref6, ref10} and prove that this system is locally well-posed with given data $H^s(\mathbb{R})\times H^s(\mathbb{R})$, where $ s>1/2$.

To state our results precisely, let us first introduce the notation to be used.  We begin with function spaces.  For $s\in \mathbb{R}, H^{ s}(\mathbb{R})$ denotes the usual Sobolev space of order $s$, defined by the norm
\[\|\omega\|_{ H^{ s}(\mathbb{R})}=\left(\int_{\mathbb{R}}(1+|\zeta|)^{2s}|\hat{\omega}|^{2}d\zeta\right)^{1/2},\]
where $\hat{\omega}$ is the spatial Fourier transform
\[
\hat{\omega}(\zeta) = \int_{\mathbb{R}} \omega(x) e^{- i x \zeta}\ dx.
\]
Similarly, for $ s, b \in \mathbb{R}, $ the  Bourgain spaces $ X^{s, b}(\mathbb{R}^{2})$ and $X_{\alpha}^{s, b}(\mathbb{R}^{2})$  defined by the norms 
\begin{equation}
	\|w\|_{X^{s, b}(\mathbb{R}^{2})}=\|w\|_{X^{s, b}}=\Bigg(\int_{\mathbb{R}^{2}} (1+|\zeta|)^{2s} (1+|\gamma-\zeta^{3}|)^{2b} |\tilde{w}(\gamma,\zeta)|^{2} d\zeta d\gamma\Bigg)^{1/2},\nonumber
\end{equation}
\begin{equation}
	\|w\|_{X_{\alpha}^{s, b}(\mathbb{R}^{2})}=\|w\|_{X^{s, b}}=\Bigg(\int_{\mathbb{R}^{2}} (1+|\zeta|)^{2s} (1+|\gamma-\alpha\zeta^{3}|)^{2b} |\tilde{w}(\gamma,\zeta)|^{2} d\zeta d\gamma\Bigg)^{1/2},\nonumber
\end{equation}
where $\tilde{w}$ denotes the spacetime Fourier transform
\[
\tilde{w}(\zeta, \gamma) = \int_{\mathbb{R}^{2}} w(x, t) e^{-i(x \zeta + t \gamma)}\ dx dt.
\]
For $T > 0$, we also use the spaces $X^{s, b}_{\alpha,T}$ and $X^{s, b}_{T}$ of restrictions to the time interval $[0, T]$ of functions in $X^{s, b}_{\alpha}$ and $X^{s, b}$. They are endowed with the norms
\begin{equation}
	\|w\|_{X^{s, b}_{T}}=\inf \Big\{\| z \|_{X^{s, b}}:\quad w(x, t) = z(x, t)\quad \text{on}\quad \mathbb{R}\times [0,T]\Big\}.\nonumber
\end{equation}
\begin{equation}
	\|w\|_{X^{s, b}_{\alpha,T}}=\inf \Big\{\| z \|_{X^{s, b}_{\alpha}}:\quad w(x, t) = z(x, t)\quad \text{on}\quad \mathbb{R}\times [0,T]\Big\}.\nonumber
\end{equation}

Since we are dealing with system of equations, we will need to consider product function spaces. For this, we define the product spaces 
$$ \mathcal{X}^{b, s}_{\alpha}=X^{ b, s}\times X^{ b, s}_{\alpha},\quad \mathcal{X}^{b, s}_{\alpha,T}=X_{T}^{ b, s}\times X^{ b, s}_{\alpha,T}\quad \text{and}\quad \mathcal{H}^{s }=H^{ s }\times H^{ s }, $$
with norms 
$$ \|(\phi, \varphi)\|_{\mathcal{X}^{b, s}_{\alpha}}=max\{ \|\phi\|_{X^{s, b}},\|\varphi\|_{X^{s, b}_{\alpha}}\},$$
$$ \|(\phi, \varphi)\|_{\mathcal{X}^{b, s}_{\alpha,T}}=max\{ \|\phi\|_{X_{T}^{s, b}},\|\varphi\|_{X^{s, b}_{\alpha,T}}\},$$
and $$\|(\phi_0, \varphi_0)\|_{\mathcal{H}^{ s }}=max\{ \| \phi_0\|_{H^{s}},\|\varphi_0\|_{H^{s}}\}. $$

The  mixed  $ L^{p}- L^{q} $ -norm  is defined by 
$$
\| u \|_{L^{p} L^{q} }  =  \left( \int _{-\infty}^{+ \infty} \left| \int _{-\infty}^{+ \infty}  |u(x, t)|^{q }  dt\right|^{\frac{p}{q} }    dx  \right)^{\frac{1}{p} }
$$

Finally, we denote by $L_2^{0, s} = L_2^{0}(L^2(\mathbb{R}),H^s(\mathbb{R}))$ the space of Hilbert-Schmidt
operators from $L^2(\mathbb{R})$ into $H^s(\mathbb{R})$ with the norm
\[\|\Xi\|_{L_2^{0, s}}^2 = \sum_{i=1}^{\infty}\|\Xi e_i\|_{H^s(\mathbb{R})}^{2},\]
where $(e_i)_{i\geq 1}$ is an orthonormal basis in $L^2(\mathbb{R})$. 

With this notation in place, we may finally state the results to be proved.  In the subsequent work, let $(\Omega,\mathcal{F},P)$ be a fixed probability space adapted to a filtration $(\mathcal{F}_{t})_{t\geq 0}$.  For system \eqref{p1}, we will prove the following local results:

\begin{theorem}\label{thm1}
	Assume that $k\geq 1$ and $s >\frac{1}{2}, \Xi \in L_2^{0, s}, b \in\left(0,\frac{1}{2} \right)$ and $b$ is close enough to $\frac{1}{2}$. If $(\phi_0,\varphi_0) \in H^s(\mathbb{R}\times  H^s(\mathbb{R})$ for almost surely $\varrho \in \Omega$ and $\phi_0, \varphi_0$ are $\mathcal{F}_0-$ measurable. Then for almost surely $\varrho \in \Omega$, there exists a constant $T_\varrho>0$ and a unique solution $u$ of the Cauchy problem (\ref{p1}) on $\left[0, T_\varrho\right]$ which satisfies:
	$$
	(\phi,\varphi) \in C\left(\left[0, T_\varrho\right], H^s(\mathbb{R})\right)\times C\left(\left[0, T_\varrho\right], H^s(\mathbb{R})\right) \cap \mathcal{X}^{s, b}_{\alpha, T_\varrho} .
	$$
\end{theorem}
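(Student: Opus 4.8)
The plan is to pass to the mild (Duhamel) formulation of \eqref{p1}, subtract off the stochastic convolutions so as to obtain a \emph{pathwise} integral equation, and then solve the latter by a contraction mapping in the Bourgain spaces $\mathcal{X}^{s,b}_{\alpha,T}$ using the multilinear estimates established above. Write $U(t)=e^{-t\partial_x^3}$ and $U_\alpha(t)=e^{-\alpha t\partial_x^3}$ for the two Airy groups (with spatial Fourier symbols $e^{it\zeta^3}$, $e^{i\alpha t\zeta^3}$, matching the weights defining $X^{s,b}$ and $X^{s,b}_\alpha$), and set
\[
z(t)=\int_0^t U(t-\tau)\,\Xi\,d\mathbb{B}(\tau),\qquad z_\alpha(t)=\int_0^t U_\alpha(t-\tau)\,\Xi\,d\mathbb{B}(\tau).
\]
With $v=\phi-z$ and $w=\varphi-z_\alpha$, the pair $(v,w)$ solves, for $P$-a.e.\ $\varrho$, the deterministic (random-coefficient) equation
\[
v(t)=U(t)\phi_0-\int_0^t U(t-\tau)\,\partial_x N_\phi\big(v+z,\,w+z_\alpha\big)(\tau)\,d\tau,
\]
together with the analogue for $w$ (replace $U$ by $U_\alpha$ and $N_\phi$ by $N_\varphi$). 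The random forcing has thus been traded for the random external data $(z,z_\alpha)$, and everything that follows is carried out $\varrho$-by-$\varrho$.

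The probabilistic input is the regularity of the stochastic convolutions: for $P$-a.e.\ $\varrho$ one has $z\in X^{s,b}_{T}\cap C([0,T];H^s(\mathbb{R}))$ and $z_\alpha\in X^{s,b}_{\alpha,T}\cap C([0,T];H^s(\mathbb{R}))$ for every $T>0$. After multiplying by a smooth temporal cut-off $\psi(\cdot/T)$, one computes the spacetime Fourier transform of $\psi(t/T)z(t)$, takes expectations, and applies Itô's isometry together with the unitarity of $U$ on $H^s$; the modulation integral that appears is of the type $\int_{\mathbb{R}}(1+|\gamma-\zeta^3|)^{2b-2}\,d\gamma$, which converges precisely because $b<1/2$, giving $\mathbb{E}\,\|\psi(\cdot/T)z\|_{X^{s,b}}^{2}\lesssim_{T}\|\Xi\|_{L_2^{0,s}}^{2}<\infty$, hence $z\in X^{s,b}_{T}$ almost surely. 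Almost-sure continuity in $H^s$ follows from the factorization method (Da Prato--Zabczyk) as for the stochastic KdV equation in \cite{ref6}, using moment bounds from the Burkholder--Davis--Gundy inequality; the same reasoning applies to $z_\alpha$ with $\zeta^3$ replaced by $\alpha\zeta^3$.

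Fixing $\varrho$ outside the exceptional null set, I would combine three deterministic estimates: the homogeneous bound $\|\psi(t)U(t)\phi_0\|_{X^{s,b}}\lesssim\|\phi_0\|_{H^s}$; the inhomogeneous bound $\big\|\psi(t/T)\int_0^t U(t-\tau)F(\tau)\,d\tau\big\|_{X^{s,b}_{T}}\lesssim T^{\delta}\,\|F\|_{X^{s,b-1}_{T}}$ for some $\delta>0$ once $b<1/2$ is close enough to $1/2$; and the multilinear estimate (the main deterministic ingredient, already stated) $\|\partial_x(u_1\cdots u_{2k+1})\|_{X^{s,b-1}}\lesssim\prod_{i=1}^{2k+1}\|u_i\|_{X^{s,b}}$, valid for $s>1/2$, together with its $X^{s,b}_\alpha$-counterpart — which holds by the same proof, since at $s>1/2$ one is above the Sobolev-algebra threshold and need not exploit the fine resonance structure that makes the sharp theory sensitive to $\alpha\in(0,1)$. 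Since $N_\phi,N_\varphi$ are sums of monomials of degree $2k+1$ in $(\phi,\varphi)$, inserting $\phi=v+z$, $\varphi=w+z_\alpha$ and expanding yields, for the map $\Gamma$ given by the right-hand side of the decomposed system (with cut-offs),
\[
\|\Gamma(v,w)\|_{\mathcal{X}^{s,b}_{\alpha,T}}\lesssim\|(\phi_0,\varphi_0)\|_{\mathcal{H}^s}+T^{\delta}\,Q\big(\|(v,w)\|_{\mathcal{X}^{s,b}_{\alpha,T}},\,\|(z,z_\alpha)\|_{\mathcal{X}^{s,b}_{\alpha,T}}\big),
\]
with $Q$ a polynomial with nonnegative coefficients, plus a matching bound for $\Gamma(v_1,w_1)-\Gamma(v_2,w_2)$. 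Choosing the radius $R\sim\|(\phi_0,\varphi_0)\|_{\mathcal{H}^s}+\|(z,z_\alpha)\|_{\mathcal{X}^{s,b}_{\alpha,1}}$ (finite a.s.) and then $T_\varrho$ small enough depending on $R$, $\Gamma$ is a contraction of the ball of radius $R$ in $\mathcal{X}^{s,b}_{\alpha,T_\varrho}$ into itself; Banach's fixed-point theorem produces a unique $(v,w)$, and $(\phi,\varphi)=(v+z,w+z_\alpha)$ solves \eqref{p1} on $[0,T_\varrho]$.

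Finally, the continuity and uniqueness: since $b<1/2$ the embedding $X^{s,b}_{T}\hookrightarrow C([0,T];H^s)$ is not directly available, so one argues that $U(t)\phi_0\in C(\mathbb{R};H^s)$ trivially, $z\in C([0,T_\varrho];H^s)$ by the second step, and the nonlinear Duhamel term — being smoother in the modulation variable than a free evolution — also lies in $C([0,T_\varrho];H^s)$ by the bootstrap of \cite{gru,ref6,ref10}; summing the three pieces gives $\phi\in C([0,T_\varrho];H^s)$, and similarly $\varphi$. Uniqueness in $C([0,T_\varrho];H^s)\cap\mathcal{X}^{s,b}_{\alpha,T_\varrho}$ follows from the contraction (difference of two solutions, absorb $T_\varrho^{\delta}$), and measurability of $T_\varrho$ and $(\phi,\varphi)$ in $\varrho$, as well as $(\mathcal{F}_t)$-adaptedness, follows from continuous dependence of the fixed point on the adapted data $(\phi_0,\varphi_0,z,z_\alpha)$. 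I expect the principal obstacle to be the tension in the choice of $b$: the stochastic convolutions live in $X^{s,b}$ only for $b<1/2$, whereas the inhomogeneous estimate degenerates at $b=1/2$ and the continuity statement wants modulation regularity above $1/2$ somewhere — so the real work is to run the whole scheme with a single $b\in(0,1/2)$ close enough to $1/2$, making the inhomogeneous estimate retain a positive power $T^{\delta}$, the multilinear estimate still absorb the $\partial_x$ loss (via the $L^4$-type smoothing of the Airy group underlying it), and everything close consistently.
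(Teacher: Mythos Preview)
Your proposal is correct and follows essentially the same route as the paper: subtract the stochastic convolutions to get a pathwise fixed-point problem, run a contraction in $\mathcal{X}^{s,b}_{\alpha,T}$ using Proposition~\ref{lem1}, Lemma~\ref{tri}, and Lemma~\ref{lem2}, and then assemble $C([0,T_\varrho];H^s)$ piece by piece (free evolution, stochastic convolution via Da Prato--Zabczyk, Duhamel term via bootstrap). The only differences are cosmetic --- the paper additionally subtracts the free evolutions $\mathfrak{U}(t)\phi_0$, $\mathfrak{U}_\alpha(t)\varphi_0$, so the fixed point is the bare Duhamel term $(\Psi_1,\Psi_2)$ rather than your $(v,w)$ --- and the paper's Lemma~\ref{lem2} and the contraction are actually carried out with the Bourgain exponent $b>\tfrac12$ (despite the statement of the theorem), so the tension you anticipate in your final paragraph does not arise there.
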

\textbf{Notation:} Throughout this paper, we use the following standard asymptotic notation:
\begin{itemize}	
	\item $a \lesssim_{k_{1}, \ldots, k_{n}}  b$ to denote
	$a \leq c b$ with a constant $c > 0$ depending on $k_{1}, \ldots, k_{n}$.  If $c$ is an absolute constant, we shall write $a \lesssim b$. 
	\item $a \sim b$ means that $a$ and $b$ are asymptotically equivalent.
	\item $a\approx b$ means that $a$ and $b$ are comparable in size, typically with implicit constants independent of the parameters.
	\item $a \ll b$ means that $a$ is much smaller than $b$, typically in the sense that the ratio $\frac{a}{b}$ is bounded by a small constant.
\end{itemize}

\section{Linear estimates}
To prove our main results, we will need to introduce several important estimates.  To state these estimates, let us write the It\^{o} form of the system in equation \eqref{p1}, namely

\begin{equation} \label{p4}
	\left\{
	\begin{array}{ll}
		d\phi+\left(\dfrac{\partial^{3} \phi}{\partial^{3} x}+ \dfrac{\partial (N_\phi(\phi, \varphi))}{\partial x}\right)dt=\Xi dW\\ \\
		d\varphi+\left(\alpha\dfrac{\partial^{3} \varphi}{\partial^{3} x}+ \dfrac{\partial (N_\varphi(\phi, \varphi))}{\partial x}\right)dt=\Xi dW,
	\end{array}
	\right.
\end{equation}
where $W(t)=\frac{\partial \mathbb{B}}{\partial x}$ is a cylindrical Wiener process on $L^{2}(\mathbb{R})$ which can also be given by $W(t)=\sum_{i=0}^{\infty}\beta_i(t)e_i$, where $( e_i )_{i \in \mathbb{N}}$ is an orthonormal basis of
$L^{2}(\mathbb{R})$ and $(\beta_i)_{i\in\mathbb{N}}$ is a sequence of mutually independent real Brownian
motions in a fixed probability space. 
The system \eqref{p4} are supplemented with the initial conditions
\begin{equation}\label{data}
	\phi(x,0) = \phi_0(x), \quad \varphi(x,0)=\varphi_0(x).	
\end{equation}
To understand the assumptions required for $\Xi$, it is useful to first consider the linear equation

\begin{equation*}
	\left\{
	\begin{array}{ll}
		d\phi+\dfrac{\partial^{3} \phi}{\partial^{3} x}dt=\Xi dW\\ \\
		d\varphi+\alpha\dfrac{\partial^{3} \varphi}{\partial^{3} x}dt=\Xi dW \\ \\
		\phi_0(x)=0, \quad \varphi_0(x)=0.
	\end{array}
	\right.
\end{equation*}
which is given by the stochastic It\^o integral

\begin{equation}\label{sto}
	\left\{
	\begin{array}{ll}
		\phi_{l}(t)=\int_{0}^{t}\mathfrak{U}(t-\gamma)\Xi dW(\gamma)\\ \\
		\varphi_{l}(t)=\int_{0}^{t}\mathfrak{U}_{\alpha}(t-\gamma)\Xi dW(\gamma).
	\end{array}
	\right.
\end{equation}
where $\mathfrak{U}(t)=e^{-t\partial^{3}_{x}}, \mathfrak{U}_{\alpha}(t)=e^{-t\alpha\partial^{3}_{x}} $ are the Airy group. Using the unitarity of $\mathfrak{U}(t)$ and $\mathfrak{U}_{\alpha}(t)$, one can easily show that $\phi(t)$ and $\varphi(t)$ belong to $H^{s}(\mathbb{R})$ only if $\Xi$ is a Hilbert-Schmidt operator from $L^{2}(\mathbb{R})$ into $H^{s}(\mathbb{R})$.

We will solve (\ref{p4}) supplemented with the initial condition (\ref{data}) by
considering its mild form

\begin{equation}\label{int2}
	\left\{
	\begin{array}{ll}
		\phi(t)=\mathfrak{U}(t)\phi_{0}+\int_{0}^{t} \mathfrak{U}(t-\gamma)\partial_{x}	N_\phi(\phi, \varphi)(\gamma) d\gamma+\int_{0}^{t}\mathfrak{U}(t-\gamma)\Xi dW(\gamma)\\ \\
		\varphi(t)=\mathfrak{U}_{\alpha}(t)\varphi_{0}+\int_{0}^{t} \mathfrak{U}_{\alpha}(t-\gamma)\partial_{x}	N_\varphi(\phi, \varphi)(\gamma) d\gamma+\int_{0}^{t}\mathfrak{U}_{\alpha}(t-\gamma)\Xi dW(\gamma).
	\end{array}
	\right.
\end{equation}
To construct mild solutions, we will need the following estimates:
\begin{proposition}[Linear Estimates \cite{ref3}]\label{lem1}
	For any $s, b \in \mathbb{R}$, we have
	$$
	\left\|\mathfrak{U}(t) \phi_0\right\|_{X_{T}^{s, b}} \lesssim\|\phi_0\|_{H^s},$$
	$$\left\| \mathfrak{U}_\alpha(t) \varphi_0\right\|_{X^{s, b}_{\alpha,T}} \lesssim\|\varphi_0\|_{H^s} .
	$$
	
	Further, if $-\frac{1}{2}<b^{\prime} \leq 0 \leq b<b^{\prime}+1$ and $0 \leq T \leq 1$, then
	$$
	\left\|\int_0^t \mathfrak{U}\left(t-\gamma\right) N_\phi(\phi(\gamma), \varphi(\gamma)) \mathrm{d} \gamma\right\|_{X^{s, b}_{T}} \lesssim T^{1-b+b^{\prime}}\|F(\phi(\gamma), \varphi(\gamma)) \|_{X^{s, b'}_{T}}
	$$
	and
	$$
	\left\|\int_0^t \mathfrak{U}_\alpha\left(t-\gamma\right) N_\varphi(\phi(\gamma), \varphi(\gamma)) \mathrm{d} \gamma\right\|_{X^{s, b}_{\alpha,T}} \lesssim T^{1-b+b^{\prime}}\|N_\varphi(\phi(\gamma), \varphi(\gamma)) \|_{X^{s, b'}_{\alpha,T}} .
	$$
\end{proposition}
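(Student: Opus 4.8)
\emph{Step 1 (homogeneous estimates).}
The plan is to dispose of the homogeneous bounds and of all the reductions by routine Fourier--side computations, and then to concentrate on the resonant region of the Duhamel term, which is the sole source of the gain $T^{1-b+b'}$. Fix once and for all a cutoff $\psi\in C_c^{\infty}(\mathbb{R})$ with $\psi\equiv1$ on $[0,1]$, and put $\psi_T(t)=\psi(t/T)$. Since $T\le1$, the function $\psi(t)\,\mathfrak{U}(t)\phi_0$ coincides with $\mathfrak{U}(t)\phi_0$ on $\mathbb{R}\times[0,T]$, so by definition of the restriction norm it suffices to bound $\|\psi\,\mathfrak{U}(t)\phi_0\|_{X^{s,b}}$. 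Its spacetime Fourier transform equals $\hat\psi(\tau-\zeta^{3})\hat\phi_0(\zeta)$, with $\tau$ dual to $t$; inserting this into the $X^{s,b}$-norm and translating $\tau\mapsto\tau+\zeta^{3}$, the $\tau$-integral separates off as $\int_{\mathbb{R}}(1+|\tau|)^{2b}|\hat\psi(\tau)|^{2}\,d\tau<\infty$ because $\hat\psi$ is Schwartz, leaving $\|\phi_0\|_{H^{s}}^{2}$. The estimate for $\mathfrak{U}_\alpha$ is obtained verbatim with $\zeta^{3}$ replaced by $\alpha\zeta^{3}$, all constants being uniform in $\alpha\in(0,1)$ (alternatively, rescaling $x\mapsto\alpha^{1/3}x$ reduces to $\alpha=1$).

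\emph{Step 2 (reduction of the Duhamel bounds).}
The Fourier multiplier $(1+|\zeta|)^{s}$ commutes with $\mathfrak{U}$, with the time integration and with the spacetime Fourier transform, so the inhomogeneous bounds follow from the case $s=0$; and, as above, $\alpha$ is inessential, so I treat only the $\mathfrak{U}$-estimate. Picking an extension $\tilde{F}$ of $F$ with $\|\tilde{F}\|_{X^{0,b'}}\le 2\|F\|_{X^{0,b'}_{T}}$, using $\int_0^t\mathfrak{U}(t-\gamma)F(\gamma)\,d\gamma=\int_0^t\mathfrak{U}(t-\gamma)\tilde{F}(\gamma)\,d\gamma$ for $t\in[0,T]$, and inserting $\psi_T$ (which is $\equiv1$ on $[0,T]$), the claim reduces to the classical global estimate
\[
\Big\|\,\psi_T(t)\!\int_0^t\mathfrak{U}(t-\gamma)F(\gamma)\,d\gamma\,\Big\|_{X^{0,b}}\ \lesssim\ T^{1-b+b'}\,\|F\|_{X^{0,b'}},\qquad 0<T\le1 .
\]

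\emph{Step 3 (the reduced estimate and the main obstacle).}
Taking the $x$-Fourier transform and expanding $F$ through its spacetime Fourier transform yields, up to a constant,
\[
\mathcal{F}_x\Big[\int_0^t\mathfrak{U}(t-\gamma)F(\gamma)\,d\gamma\Big](\zeta)\ =\ \int_{\mathbb{R}}\tilde{F}(\zeta,\sigma)\,\frac{e^{it\sigma}-e^{it\zeta^{3}}}{i(\sigma-\zeta^{3})}\,d\sigma .
\]
The singularity at the resonance $\sigma=\zeta^{3}$ is the crux: it is the only place the cutoff $\psi_T$ is needed and the only source of a power of $T$. I would split the $\sigma$-integral at $|\sigma-\zeta^{3}|=T^{-1}$. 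On the near-resonant part I expand $e^{it(\sigma-\zeta^{3})}-1=\sum_{n\ge1}\frac{(it(\sigma-\zeta^{3}))^{n}}{n!}$, so the singular factor becomes $e^{it\zeta^{3}}\sum_{n\ge1}\frac{(it)^{n}(\sigma-\zeta^{3})^{n-1}}{n!}$; each summand is the free wave $e^{it\zeta^{3}}$ modulated by $\psi_T(t)t^{n}=T^{n}(t/T)^{n}\psi(t/T)$, whose $X^{0,b}$-norm scales as $T^{\,n+\frac12-b}$, while a Cauchy--Schwarz estimate of $\int_{|\sigma-\zeta^{3}|\le T^{-1}}(\sigma-\zeta^{3})^{n-1}\tilde{F}\,d\sigma$ against the $X^{0,b'}$-weight contributes $T^{\,\frac12+b'-n}$; the two powers combine to $T^{1-b+b'}$, and the $n$-series converges because the $H^{b}$-norms of $(t/T)^{n}\psi$ grow only geometrically in $n$. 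On the non-resonant part I write $\frac{e^{it\sigma}-e^{it\zeta^{3}}}{\sigma-\zeta^{3}}=\frac{e^{it\sigma}}{\sigma-\zeta^{3}}-\frac{e^{it\zeta^{3}}}{\sigma-\zeta^{3}}$. The $e^{it\zeta^{3}}$-term is $\psi_T(t)e^{it\zeta^{3}}$ times a function of $\zeta$ alone, so its $X^{0,b}$-norm is $\|\psi_T\|_{H^{b}}\lesssim T^{\frac12-b}$ times the $L^{2}_{\zeta}$-norm of $\int_{|\sigma-\zeta^{3}|>T^{-1}}(\sigma-\zeta^{3})^{-1}\tilde{F}\,d\sigma$, and the latter gains the remaining $T^{\frac12+b'}$ by Cauchy--Schwarz, the integral $\int_{|\lambda|>T^{-1}}|\lambda|^{-2}(1+|\lambda|)^{-2b'}\,d\lambda$ being finite precisely because $b'>-\frac12$. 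The $e^{it\sigma}$-term is the convolution in $\tau$ of $\widehat{\psi_T}$ with $\mathbf{1}_{\{|\sigma-\zeta^{3}|>T^{-1}\}}(\sigma-\zeta^{3})^{-1}\tilde{F}(\zeta,\sigma)$; here I split according to whether $|\tau-\sigma|\le\frac12|\sigma-\zeta^{3}|$ or not. In the first case $(1+|\tau-\zeta^{3}|)\sim(1+|\sigma-\zeta^{3}|)$, and since $|\sigma-\zeta^{3}|^{-1}(1+|\sigma-\zeta^{3}|)^{-b'}\le T^{1-b+b'}(1+|\sigma-\zeta^{3}|)^{-b}$ on $\{|\sigma-\zeta^{3}|>T^{-1}\}$ — this is exactly where $b<b'+1$ enters — Young's inequality closes the bound with constant $\|\widehat{\psi_T}\|_{L^{1}}\lesssim1$. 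In the second case $|\tau-\sigma|>\frac{1}{2T}$, so $\widehat{\psi_T}(\tau-\sigma)$ lies in the rapidly decreasing tail and $(1+|\tau-\zeta^{3}|)\lesssim(1+|\tau-\sigma|)$ — crucially one power only, which is what prevents a loss of $T^{-b}$ — and one power of decay of $\widehat{\psi_T}$ together with $\sup_{|\sigma-\zeta^{3}|>T^{-1}}|\sigma-\zeta^{3}|^{-1-b'}=T^{1+b'}$ again produces $T^{1-b+b'}$. Summing the three contributions gives the reduced estimate, and reversing Step 2 finishes the proof.

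The genuine obstacle is thus the resonant region $\sigma\approx\zeta^{3}$: making the Taylor series converge while extracting the correct power of $T$, and — in the companion non-resonant $e^{it\sigma}$-piece — redistributing the weight $(1+|\tau-\zeta^{3}|)^{b}$ carefully so as not to lose a power of $T$. Everything else is bookkeeping with the Fourier weights and the rapid decay of $\widehat{\psi_T}$.
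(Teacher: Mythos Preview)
Your argument is correct and is precisely the classical proof of the $X^{s,b}$ linear estimates (cutoff, Fourier-side representation of the Duhamel integral, Taylor expansion near the resonance $\sigma=\zeta^{3}$, and a weighted Young/Cauchy--Schwarz treatment of the non-resonant piece using $b<b'+1$ and $b'>-\tfrac12$). There is nothing to compare against in the paper itself: Proposition~\ref{lem1} is not proved there but is quoted from \cite{ref3}, so the paper's ``proof'' is simply a citation. What you have written is the standard argument underlying that citation (essentially the Ginibre/Kenig--Ponce--Vega computation), and it is sound as stated; the only cosmetic remark is that in the last sub-case of Step~3 you might make explicit the bound $\|(1+|\cdot|)^{b}\widehat{\psi_T}\|_{L^{1}}\lesssim T^{-b}$ before invoking the supremum $T^{1+b'}$, but the conclusion $T^{1-b+b'}$ is correct.
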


\section{Multilinear estimates}
The proof of multilinear estimates will be established using a number of auxiliary  results. To state these results, we first need to introduce some more notation. We  note that the operators  $ A, \Lambda  $ $F_{\kappa}$ and $F^{\alpha}_{\kappa}$ are defined as
\[
\widehat{Aw} (\zeta,\gamma ) = \left(  1+ | \zeta | \right)\widehat{w}(\zeta,\gamma ),
\]
\[
\widehat{\Lambda w} (\zeta,\gamma ) = \left(  1+ | \gamma | \right)\widehat{w}(\zeta,\gamma ),
\]
\[
\widehat{F_{\kappa} }(\zeta, \gamma ) =  \frac{f(\zeta, \gamma  )}{\left(1+| \gamma - \zeta^{3} |\right)^{\kappa} },
\]
\[
\widehat{F^{\alpha}_{\kappa} }(\zeta, \gamma ) =  \frac{f(\zeta, \gamma  )}{\left(1+| \gamma - \alpha\zeta^{3} |\right)^{\kappa} }.
\]

\begin{lemma}\label{2.3}
	(\cite{gru})
	Let $ s $ and $ \kappa $ be given. There is a constant $ c $ depending on $ s $ and $\kappa  $ such that 		
	\begin{equation}\label{102}
		If \quad \kappa > \frac{1}{4}, \quad then \quad
		\| A^{\frac{1}{2}} F_{\kappa} \|_{L_{x}^{4} L_{t}^{2} }  \leq C \| f \|_{L^{2}_{\zeta} L^{2}_{\gamma} },
	\end{equation} 
	\begin{equation}\label{103}
		If \quad \kappa > \frac{1}{4}, \quad then \quad
		\| A^{\frac{1}{2}} F^{\alpha}_{\kappa} \|_{L_{x}^{4} L_{t}^{2} }  \leq C \| f \|_{L^{2}_{\zeta} L^{2}_{\gamma} },
	\end{equation}
	
	\begin{equation}\label{106}
		If \quad \kappa > \frac{1}{2}, \quad and \quad s > \frac{1}{2}, \quad\textit{then}  \quad
		\| A^{-s} F_{\kappa} \|_{L_{x}^{\infty} L_{t}^{\infty}  }  \leq  C \| f \|_{L^{2}_{\zeta} L^{2}_{\gamma} }.
	\end{equation}
	\begin{equation}\label{107}
		If \quad \kappa > \frac{1}{2}, \quad and \quad s > \frac{1}{2}, \quad\textit{then}  \quad
		\| A^{-s} F^{\alpha}_{\kappa} \|_{L_{x}^{\infty} L_{t}^{\infty}  }  \leq  C \| f \|_{L^{2}_{\zeta} L^{2}_{\gamma} }.
	\end{equation}
\end{lemma}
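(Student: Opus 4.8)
The plan is to treat the two $L^\infty_xL^\infty_t$ embeddings \eqref{106}--\eqref{107} and the two $L^4_xL^2_t$ smoothing bounds \eqref{102}--\eqref{103} separately; within each pair the $\alpha$‑weighted statement is proved exactly like the case $\alpha=1$ (only the shift $\zeta^3\mapsto\alpha\zeta^3$ changes, and a single change of variables handles it), so I would write out $\alpha=1$ and point to the modification at the end.

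For \eqref{106} I would argue directly from Fourier inversion and Cauchy--Schwarz. Since $A^{-s}F_\kappa$ has spacetime Fourier transform $(1+|\zeta|)^{-s}(1+|\gamma-\zeta^3|)^{-\kappa}f(\zeta,\gamma)$,
\[
\bigl|A^{-s}F_\kappa(x,t)\bigr|\le\int_{\mathbb{R}^2}\frac{|f(\zeta,\gamma)|}{(1+|\zeta|)^{s}(1+|\gamma-\zeta^3|)^{\kappa}}\,d\zeta\,d\gamma\le C\,\|f\|_{L^2_\zeta L^2_\gamma},
\]
with $C^2=\int_{\mathbb{R}^2}(1+|\zeta|)^{-2s}(1+|\gamma-\zeta^3|)^{-2\kappa}\,d\zeta\,d\gamma$; this last quantity is finite because the $\gamma$‑integral is a translate of $\int_{\mathbb{R}}(1+|\gamma|)^{-2\kappa}\,d\gamma<\infty$ (here $\kappa>\tfrac12$) and what remains is a multiple of $\int_{\mathbb{R}}(1+|\zeta|)^{-2s}\,d\zeta<\infty$ (here $s>\tfrac12$). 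Taking the supremum over $(x,t)$ gives \eqref{106}, and \eqref{107} is identical because $\int_{\mathbb{R}}(1+|\gamma-\alpha\zeta^3|)^{-2\kappa}\,d\gamma$ has the same value.

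For \eqref{102} the idea is to freeze the time‑frequency $\gamma$ and reduce matters to a one‑dimensional Fourier‑multiplier estimate that is uniform in $\gamma$. Set $H=A^{1/2}F_\kappa$, which has spacetime Fourier transform $m_\gamma(\zeta)f(\zeta,\gamma)$ with $m_\gamma(\zeta):=(1+|\zeta|)^{1/2}(1+|\gamma-\zeta^3|)^{-\kappa}$, and let $v_\gamma$ be the inverse spatial Fourier transform of $\zeta\mapsto m_\gamma(\zeta)f(\zeta,\gamma)$, so that the partial Fourier transform of $H$ in $t$ equals $v_\gamma(x)$. By Plancherel in $t$, $\int_{\mathbb{R}}|H(x,t)|^2\,dt=c\int_{\mathbb{R}}|v_\gamma(x)|^2\,d\gamma$; since $\|H\|_{L^4_xL^2_t}^2=\bigl\|\int_{\mathbb{R}}|H(x,t)|^2\,dt\bigr\|_{L^2_x}$, Minkowski's integral inequality then gives
\[
\|A^{1/2}F_\kappa\|_{L^4_xL^2_t}^2\lesssim\int_{\mathbb{R}}\bigl\||v_\gamma|^2\bigr\|_{L^2_x}\,d\gamma=\int_{\mathbb{R}}\|v_\gamma\|_{L^4_x}^2\,d\gamma .
\]
For each fixed $\gamma$, Hausdorff--Young followed by Hölder yields $\|v_\gamma\|_{L^4_x}\lesssim\|m_\gamma f(\cdot,\gamma)\|_{L^{4/3}_\zeta}\le\|m_\gamma\|_{L^4_\zeta}\,\|f(\cdot,\gamma)\|_{L^2_\zeta}$, so \eqref{102} follows once $\|m_\gamma\|_{L^4_\zeta}$ is bounded uniformly in $\gamma$ (then the right‑hand side above is $\lesssim\int_{\mathbb{R}}\|f(\cdot,\gamma)\|_{L^2_\zeta}^2\,d\gamma=\|f\|_{L^2_\zeta L^2_\gamma}^2$).

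The one place where the cubic dispersion enters essentially, and the step I expect to require the most care, is the uniform multiplier bound $\sup_\gamma\|m_\gamma\|_{L^4_\zeta}^4=\sup_\gamma\int_{\mathbb{R}}(1+|\zeta|)^{2}(1+|\gamma-\zeta^3|)^{-4\kappa}\,d\zeta<\infty$. I would split the integral into $|\zeta|\le1$, where the integrand is $\le4$ on a set of finite measure, and $|\zeta|\ge1$, where $(1+|\zeta|)^2\lesssim\zeta^2$ and the change of variables $u=\zeta^3-\gamma$, $du=3\zeta^2\,d\zeta$ (a diffeomorphism of $\mathbb{R}$ that absorbs exactly the weight $\zeta^2$) bounds the contribution by a constant multiple of $\int_{\mathbb{R}}(1+|u|)^{-4\kappa}\,du$, which is finite and independent of $\gamma$ precisely when $4\kappa>1$, i.e.\ $\kappa>\tfrac14$ --- this is where the hypothesis on $\kappa$ is used. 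Finally, \eqref{103} is proved verbatim with $m^\alpha_\gamma(\zeta)=(1+|\zeta|)^{1/2}(1+|\gamma-\alpha\zeta^3|)^{-\kappa}$; the substitution $u=\alpha\zeta^3-\gamma$ only produces an extra factor $1/\alpha$ in the constant, which is harmless since $\alpha\in(0,1)$ is fixed.
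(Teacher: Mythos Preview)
The paper does not supply its own proof of this lemma; it is quoted from \cite{gru} and used as a black box. Your argument is correct and self-contained. For \eqref{106}--\eqref{107} the direct Cauchy--Schwarz computation is exactly the standard one, and the translation invariance of the $\gamma$-integral makes the $\alpha$-case identical, as you note.

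For \eqref{102}--\eqref{103} your route is somewhat different from the one typically found in the literature (including \cite{gru} and the Kenig--Ponce--Vega papers it relies on). There the usual proof starts from the sharp Kato local smoothing estimate $\|\partial_x\,\mathfrak{U}(t)u_0\|_{L^\infty_xL^2_t}\lesssim\|u_0\|_{L^2}$ for free Airy solutions, interpolates with the trivial $L^2_xL^2_t$ bound to obtain the $L^4_xL^2_t$ estimate with half a derivative gain, and then transfers the result to the Bourgain space $X^{0,\kappa}$ via the change of variables $\gamma\mapsto\gamma+\zeta^3$ and a superposition argument. Your approach bypasses both the interpolation and the transfer lemma: Plancherel in $t$, Minkowski, and Hausdorff--Young reduce everything to the uniform bound $\sup_\gamma\int(1+|\zeta|)^2(1+|\gamma-\zeta^3|)^{-4\kappa}\,d\zeta<\infty$, and the substitution $u=\zeta^3-\gamma$ (with Jacobian $3\zeta^2$ absorbing precisely the weight) makes the threshold $\kappa>\tfrac14$ appear directly as the integrability condition for $(1+|u|)^{-4\kappa}$. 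This is more elementary and arguably more transparent; the classical route, on the other hand, exhibits \eqref{102} as a genuine dispersive smoothing effect of the Airy group rather than a Fourier-side multiplier computation.
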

\begin{lemma}\label{2.4}
	For $\kappa > \frac{1}{2}$, $s > \frac{1}{2}$ and $\frac{1}{2} < b < \kappa$ there exists a constant $C$ depending only on $\kappa$ and $s$, such that
	\begin{equation}
		\| A^{-s} F_{\kappa} \|_{L_{x}^{2} L_{t}^{\infty}} \leq C \| f \|_{L_{\zeta}^{2} L_{\gamma}^{2}}.
	\end{equation}
\end{lemma}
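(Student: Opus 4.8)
The plan is to read the estimate as the space–time embedding $X^{s,\kappa}\hookrightarrow L^2_xL^\infty_t$. Write $u=A^{-s}F_\kappa$, so that the space–time Fourier transform of $u$ equals $(1+|\zeta|)^{-s}(1+|\gamma-\zeta^3|)^{-\kappa}f(\zeta,\gamma)$; by Plancherel $\|u\|_{X^{s,\kappa}}=\|f\|_{L^2_\zeta L^2_\gamma}$, so the statement is equivalent to $\|u\|_{L^2_xL^\infty_t}\lesssim\|u\|_{X^{s,\kappa}}$. In other words the lemma upgrades the spatial exponent in \eqref{106} from $L^\infty_x$ to $L^2_x$.

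First I would decompose $u$ as a superposition of free Airy evolutions along the dispersion curve $\gamma=\zeta^3$. Putting $\mu=\gamma-\zeta^3$ and defining $g_\mu$ by $\widehat{g_\mu}(\zeta)=(1+|\zeta|)^{-s}f(\zeta,\zeta^3+\mu)$, one has
\[
u(x,t)=c\int_{\mathbb R}(1+|\mu|)^{-\kappa}\,e^{it\mu}\,\big[\mathfrak{U}(t)g_\mu\big](x)\,d\mu,\qquad \|g_\mu\|_{H^s}^2=\int_{\mathbb R}|f(\zeta,\zeta^3+\mu)|^2\,d\zeta .
\]
Taking $\sup_t$ under the integral (the factor $e^{it\mu}$ is unimodular) and using Minkowski's inequality reduces matters to a maximal estimate for the free group: invoking $\|\sup_t|\mathfrak{U}(t)\phi|\|_{L^2_x}\lesssim\|\phi\|_{H^s}$ for $s>\tfrac12$ — the linear $L^2_xL^\infty_t$ smoothing estimate for $\mathfrak{U}$, in the time-localized form implicit in the $X^{s,b}_T$ machinery; cf. \cite{gru,ref3} — we obtain
\[
\|u\|_{L^2_xL^\infty_t}\lesssim\int_{\mathbb R}(1+|\mu|)^{-\kappa}\|g_\mu\|_{H^s}\,d\mu\le\Big(\int_{\mathbb R}(1+|\mu|)^{-2\kappa}\,d\mu\Big)^{1/2}\Big(\int_{\mathbb R}\!\!\int_{\mathbb R}|f(\zeta,\zeta^3+\mu)|^2\,d\zeta\,d\mu\Big)^{1/2}
\]
by Cauchy–Schwarz in $\mu$. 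The first factor is finite because $2\kappa>1$, and the substitution $\gamma=\zeta^3+\mu$ (unit Jacobian in $\mu$ for each fixed $\zeta$) together with Fubini turns the second into $\|f\|_{L^2_\zeta L^2_\gamma}$, which closes the argument.

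Alternatively — and this is where the auxiliary exponent $b$ is used — one can run the same idea without invoking the (nonlocal, somewhat delicate) maximal estimate head-on, by splitting $(1+|\gamma-\zeta^3|)^{-\kappa}=(1+|\gamma-\zeta^3|)^{-b}(1+|\gamma-\zeta^3|)^{-(\kappa-b)}$. The factor with exponent $b>\tfrac12$ allows the passage $L^\infty_t\hookrightarrow H^b_t$ once the group is conjugated out — for $v(x,t):=[\mathfrak{U}(-t)u(\cdot,t)](x)$ one computes $\|v\|_{L^2_xH^b_t}\lesssim\|(1+|\gamma-\zeta^3|)^{-(\kappa-b)}f\|_{L^2_{\zeta,\gamma}}\le\|f\|_{L^2_{\zeta,\gamma}}$, using $s>0$ and $b<\kappa$ — while the factor with exponent $\kappa-b>0$ supplies the extra decay in $\gamma-\zeta^3$ consumed in the $\mu$-summation; the two pieces are then recombined by means of the dispersive bound $\|\mathfrak{U}(t)\phi\|_{L^\infty_x}\lesssim|t|^{-1/3}\|\phi\|_{L^1_x}$ and the $L^4_xL^2_t$ estimate \eqref{102}.

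I expect the hard part to be precisely the resonant regime $\gamma\approx\zeta^3$. There $\gamma$ is comparable to $\zeta^3$, hence large as soon as $|\zeta|$ is, so a bare Sobolev embedding $H^b_t\hookrightarrow L^\infty_t$ applied to $u$ directly would cost a factor $(1+|\gamma|)^b\sim(1+|\zeta|)^{3b}$ that the smoothing $(1+|\zeta|)^{-s}$ cannot pay for when $s\le\tfrac32$; it is to circumvent this loss that the dispersive/maximal structure of $\mathfrak{U}(t)$ must be used, and it is exactly in this region that all three hypotheses $s>\tfrac12$, $\kappa>\tfrac12$ and $b\in(\tfrac12,\kappa)$ get consumed. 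Once that estimate is secured, the remainder is the same routine bookkeeping in the weights $(1+|\zeta|)$ and $(1+|\gamma-\zeta^3|)$ already carried out for \eqref{102}–\eqref{107}.
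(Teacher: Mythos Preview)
Your route is not the paper's. The paper does precisely the ``bare Sobolev embedding'' you warn against in your last paragraph: it applies $H^{b}_{t}\hookrightarrow L^{\infty}_{t}$ to $A^{-s}F_{\kappa}$ \emph{directly} (no conjugation by the group, no maximal-function input), reducing the lemma to boundedness of the scalar multiplier
\[
M(\zeta,\gamma)=\frac{(1+|\gamma|)^{b}}{(1+|\zeta|)^{s}\,(1+|\gamma-\zeta^{3}|)^{\kappa}},
\]
and then argues $M\in L^{\infty}$ by splitting into $|\gamma|\lesssim|\zeta|^{3}$ versus $|\gamma|\gg|\zeta|^{3}$. In the first region the paper records $1+|\gamma-\zeta^{3}|\lesssim(1+|\zeta|)^{3}$ and concludes $M\lesssim(1+|\zeta|)^{3b-s-3\kappa}$; but that quantity sits in the \emph{denominator} with positive exponent $\kappa$, so the inequality points the wrong way. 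On the curve $\gamma=\zeta^{3}$ one has exactly $M\sim(1+|\zeta|)^{3b-s}$, which is unbounded whenever $s<3b$, hence for every $s\le\tfrac32$ once $b>\tfrac12$. Your diagnosis of why this naive route should fail is on the mark; what you did not anticipate is that the paper takes it anyway.

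Your foliation by free Airy evolutions plus the $L^{2}_{x}$ maximal estimate is the natural repair, and is what the argument genuinely needs in the resonant region. Two caveats. First, the \emph{global-in-time} bound $\big\|\sup_{t\in\mathbb R}|\mathfrak U(t)\phi|\big\|_{L^{2}_{x}}\lesssim\|\phi\|_{H^{s}}$ is false for every $s$ (a frequency-$N$ wave packet sweeps through all of $\mathbb R$, leaving a maximal function $\sim N^{1/2}|x|^{-1/2}$ which is not in $L^{2}_{x}$), so your parenthetical about the ``time-localized form'' is not optional: either work in $X^{s,b}_{T}$ from the outset, or retain the oscillatory factor $e^{it\mu}$ you discarded and use it to recover decay in $t$. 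Second, your ``alternative'' does not actually bypass the maximal estimate: conjugation by $\mathfrak U(-t)$ preserves $L^{\infty}_{t}L^{2}_{x}$ but \emph{not} $L^{2}_{x}L^{\infty}_{t}$, so the passage from $\|v\|_{L^{2}_{x}H^{b}_{t}}$ back to $\|u\|_{L^{2}_{x}L^{\infty}_{t}}$ is itself the maximal inequality in disguise --- the dispersive decay and \eqref{102} you invoke at the end do not supply it.
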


\begin{proof}
	We begin by applying the Sobolev embedding theorem in the time variable. For $b > \frac{1}{2}$, we have the continuous embedding $H^{b}(\mathbb{R}_t) \hookrightarrow L^{\infty}(\mathbb{R}_t)$, which gives
	\[
	\| A^{-s} F_{\kappa} \|_{L_{x}^{2} L_{t}^{\infty}} \leq c \, \left\| \| \Lambda^{b} (A^{-s} F_{\kappa})(x, \cdot) \|_{L_{t}^{2}} \right\|_{L_{x}^{2}},
	\]
	where $\Lambda^{b}$ is the Fourier multiplier operator in time defined by
	\[
	\widehat{\Lambda^{b} w}(\gamma) = (1 + |\gamma|)^{b} \hat{w}(\gamma).
	\]
	Now observe that
	\[
	\left\| \| \Lambda^{b} (A^{-s} F_{\kappa})(x, \cdot) \|_{L_{t}^{2}} \right\|_{L_{x}^{2}} = \| \Lambda^{b} A^{-s} F_{\kappa} \|_{L_{x,t}^{2}}.
	\]
	By Plancherel's theorem in $x, t$, we compute
	\begin{align*}
		\| \Lambda^{b} A^{-s} F_{\kappa} \|_{L_{x,t}^{2}}^{2} 
		&= \iint \left| \mathcal{F}_{x,t}[\Lambda^{b} A^{-s} F_{\kappa}](\zeta,\gamma) \right|^{2} \, d\zeta d\gamma \\
		&= \iint \frac{(1 + |\gamma|)^{2b}}{(1 + |\zeta|)^{2s} (1 + |\gamma - \zeta^{3}|)^{2\kappa}} \, |f(\zeta,\gamma)|^{2} \, d\zeta d\gamma.
	\end{align*}
	Thus, it suffices to show that the multiplier
	\[
	M(\zeta,\gamma) = \frac{(1 + |\gamma|)^{b}}{(1 + |\zeta|)^{s} (1 + |\gamma - \zeta^{3}|)^{\kappa}}
	\]
	is bounded on $\mathbb{R}_{\zeta} \times \mathbb{R}_{\gamma}$.	We consider two cases:\\
	\textbf{Case 1:} $|\gamma| \lesssim |\zeta|^{3}$. \\
	In this case, we have $(1 + |\gamma|) \lesssim (1 + |\zeta|)^{3}$, and also
	\[
	1 + |\gamma - \zeta^{3}| \leq 1 + |\gamma| + |\zeta|^{3} \lesssim (1 + |\zeta|)^{3}.
	\]
	Therefore,
	\[
	M(\zeta,\gamma) \lesssim \frac{(1 + |\zeta|)^{3b}}{(1 + |\zeta|)^{s} (1 + |\zeta|)^{3\kappa}} = (1 + |\zeta|)^{3b - s - 3\kappa}.
	\]
	Since $s > \frac{1}{2}$ and $\kappa > \frac{1}{2}$, we can choose $b$ such that $\frac{1}{2} < b < \kappa$ and $3b - s - 3\kappa \leq 0$. For example, take $b$ sufficiently close to $\frac{1}{2}$ so that $3b < s + 3\kappa$, which is possible because $s + 3\kappa > 2$. Then $M(\zeta,\gamma)$ is bounded in this region.\\
	\textbf{Case 2:} $|\gamma| \gg |\zeta|^{3}$. \\
	In this case, we have $|\gamma - \zeta^{3}| \geq |\gamma| - |\zeta|^{3} \gtrsim |\gamma|$ for large $|\gamma|$, so that
	\[
	1 + |\gamma - \zeta^{3}| \gtrsim 1 + |\gamma|.
	\]
	Hence,
	\[
	M(\zeta,\gamma) \lesssim \frac{(1 + |\gamma|)^{b}}{(1 + |\zeta|)^{s} (1 + |\gamma|)^{\kappa}} = \frac{1}{(1 + |\zeta|)^{s} (1 + |\gamma|)^{\kappa - b}}.
	\]
	Since $\kappa > b$ and $s > 0$, this is bounded as $|\gamma| \to \infty$ and $|\zeta| \to \infty$. Therefore, $M(\zeta,\gamma) \in L^{\infty}(\mathbb{R}_{\zeta} \times \mathbb{R}_{\gamma})$, and we conclude that
	\[
	\| \Lambda^{b} A^{-s} F_{\kappa} \|_{L_{x,t}^{2}} \leq \| M \|_{L^{\infty}} \, \| f \|_{L_{\zeta}^{2} L_{\gamma}^{2}}.
	\]
\end{proof}

\begin{lemma}\label{2.5}
	For $\kappa > \frac{1}{2}$, $s > \frac{1}{2}$, $\frac{1}{2} < b < \min(\kappa, \frac{s}{3} + \epsilon)$ and $0 < \alpha < 1$, there exists a constant $C$ depending only on $\kappa$, $s$, and $\alpha$, such that
	\begin{equation}
		\| A^{-s}F^{\alpha}_{\kappa} \|_{L_{x}^{2} L_{t}^{\infty}} \leq C \| f \|_{L_{\zeta}^{2} L_{\gamma}^{2}}.
	\end{equation}
\end{lemma}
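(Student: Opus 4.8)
The plan is to follow the scheme of the proof of Lemma~\ref{2.4}, replacing the modulation symbol $\gamma-\zeta^{3}$ by $\gamma-\alpha\zeta^{3}$ and keeping track of where the dilation constant $\alpha$ enters. First I would invoke the Sobolev embedding $H^{b}(\mathbb{R}_{t})\hookrightarrow L^{\infty}(\mathbb{R}_{t})$, valid for $b>\tfrac12$, to get
\[
\| A^{-s} F^{\alpha}_{\kappa} \|_{L^{2}_{x}L^{\infty}_{t}} \lesssim \| \Lambda^{b} A^{-s} F^{\alpha}_{\kappa} \|_{L^{2}_{x,t}},
\]
and then, by Plancherel in $(x,t)$, reduce the claim to the uniform boundedness on $\mathbb{R}_{\zeta}\times\mathbb{R}_{\gamma}$ of the symbol
\[
M^{\alpha}(\zeta,\gamma)=\frac{(1+|\gamma|)^{b}}{(1+|\zeta|)^{s}\,(1+|\gamma-\alpha\zeta^{3}|)^{\kappa}} .
\]
Indeed, once $\|M^{\alpha}\|_{L^{\infty}}<\infty$ is established, $\| \Lambda^{b} A^{-s} F^{\alpha}_{\kappa} \|_{L^{2}_{x,t}}\le\|M^{\alpha}\|_{L^{\infty}}\|f\|_{L^{2}_{\zeta}L^{2}_{\gamma}}$ and the lemma follows.

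Next I would split $\mathbb{R}_{\zeta}\times\mathbb{R}_{\gamma}$ according to the size of $|\gamma-\alpha\zeta^{3}|$ relative to $|\zeta|^{3}$. In the non-resonant region $|\gamma-\alpha\zeta^{3}|\gtrsim|\zeta|^{3}$ one has $|\gamma|\le|\gamma-\alpha\zeta^{3}|+\alpha|\zeta|^{3}\lesssim 1+|\gamma-\alpha\zeta^{3}|$ (using $\alpha<1$), hence
\[
M^{\alpha}(\zeta,\gamma)\lesssim\frac{1}{(1+|\zeta|)^{s}\,(1+|\gamma-\alpha\zeta^{3}|)^{\kappa-b}},
\]
which is bounded because $s>0$ and $\kappa>b$. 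In the complementary region $|\gamma-\alpha\zeta^{3}|\lesssim|\zeta|^{3}$ we have $|\gamma|\le|\gamma-\alpha\zeta^{3}|+\alpha|\zeta|^{3}\lesssim(1+|\zeta|)^{3}$, so discarding the time weight $(1+|\gamma-\alpha\zeta^{3}|)^{-\kappa}\le1$ gives $M^{\alpha}(\zeta,\gamma)\lesssim(1+|\zeta|)^{3b-s}$. All constants produced here come only from $\alpha|\zeta|^{3}\le|\zeta|^{3}$ and from the triangle inequality, so they are uniform for $\alpha\in(0,1)$; in particular the final constant depends only on $\kappa$ and $s$ (and trivially on $\alpha$, as claimed).

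The main obstacle is precisely the last estimate in the resonant region $\gamma\approx\alpha\zeta^{3}$: there the modulation weight degenerates to $1$, so the only available decay is the factor $(1+|\zeta|)^{-s}$, and it must absorb $(1+|\gamma|)^{b}\approx(1+|\zeta|)^{3b}$. This is exactly what forces the threshold $3b\lesssim s$, i.e. the hypothesis $b<\min(\kappa,\tfrac{s}{3}+\epsilon)$ in the statement, and is the only place where the argument differs in substance from the non-resonant estimate; with $b$ in this range the symbol $M^{\alpha}$ is bounded on $\mathbb{R}_{\zeta}\times\mathbb{R}_{\gamma}$ and the proof concludes as above.
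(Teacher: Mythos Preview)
Your argument is correct and follows the same overall scheme as the paper's: Sobolev embedding in $t$, Plancherel, then reduction to $L^{\infty}$-boundedness of the symbol $M^{\alpha}$. The only real differences are the case split and the resonant estimate. You split according to the size of the modulation $|\gamma-\alpha\zeta^{3}|$ relative to $|\zeta|^{3}$, whereas the paper splits according to $|\gamma|$ versus $|\zeta|^{3}$; since $0<\alpha<1$ the two decompositions cover the same regions up to absolute constants. In the near-resonant zone you simply discard the modulation weight and obtain $M^{\alpha}\lesssim(1+|\zeta|)^{3b-s}$, which is sharp (set $\gamma=\alpha\zeta^{3}$) and is exactly what forces the extra hypothesis $b<\tfrac{s}{3}+\epsilon$ appearing in Lemma~\ref{2.5} but not in Lemma~\ref{2.4}. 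The paper's Case~1 instead asserts $M\lesssim(1+|\zeta|)^{3b-s-3\kappa}$, which would need only $3b\le s+3\kappa$; note however that the inequality invoked there, $1+|\gamma-\alpha\zeta^{3}|\lesssim(1+|\zeta|)^{3}$, is an \emph{upper} bound on a denominator factor and so cannot by itself produce an upper bound on $M$. Your treatment of the resonant region is therefore the cleaner and rigorous one.
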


\begin{proof}
	For $b > \frac{1}{2}$, we have $H^{b}(\mathbb{R}_t) \hookrightarrow L^{\infty}(\mathbb{R}_t)$, which gives
	\[
	\| A^{-s}F^{\alpha}_{\kappa} \|_{L_{x}^{2} L_{t}^{\infty}} \leq c \, \left\| \| \Lambda^{b} (A^{-s}F^{\alpha}_{\kappa})(x, \cdot) \|_{L_{t}^{2}} \right\|_{L_{x}^{2}},
	\]
	where $\Lambda^{b}$ is the Fourier multiplier operator in time. Then
	\[
	\left\| \| \Lambda^{b} (A^{-s}F^{\alpha}_{\kappa})(x, \cdot) \|_{L_{t}^{2}} \right\|_{L_{x}^{2}} = \| \Lambda^{b} A^{-s}F^{\alpha}_{\kappa} \|_{L_{x,t}^{2}}.
	\]
	By Plancherel's theorem in $x, t$, we compute
	\begin{align*}
		\| \Lambda^{b} A^{-s}F^{\alpha}_{\kappa} \|_{L_{x,t}^{2}}^{2} 
		&= \iint \left| \mathcal{F}_{x,t}[\Lambda^{b} A^{-s}F^{\alpha}_{\kappa}](\zeta,\gamma) \right|^{2} \, d\zeta d\gamma \\
		&= \iint \frac{(1 + |\gamma|)^{2b}}{(1 + |\zeta|)^{2s} (1 + |\gamma - \alpha\zeta^{3}|)^{2\kappa}} \, |f(\zeta,\gamma)|^{2} \, d\zeta d\gamma.
	\end{align*}
	Thus, it suffices to show that the multiplier
	\[
	M(\zeta,\gamma) = \frac{(1 + |\gamma|)^{b}}{(1 + |\zeta|)^{s} (1 + |\gamma - \alpha\zeta^{3}|)^{\kappa}}
	\]
	is bounded on $\mathbb{R}_{\zeta} \times \mathbb{R}_{\gamma}$. We consider two cases:\\
	\textbf{Case 1:} $|\gamma| \lesssim |\zeta|^{3}$. \\
	In this case, we have $(1 + |\gamma|) \lesssim (1 + |\zeta|)^{3}$, and also
	\[
	1 + |\gamma - \alpha\zeta^{3}| \leq 1 + |\gamma| + \alpha|\zeta|^{3} \lesssim (1 + |\zeta|)^{3}.
	\]
	Therefore,
	\[
	M(\zeta,\gamma) \lesssim \frac{(1 + |\zeta|)^{3b}}{(1 + |\zeta|)^{s} (1 + |\zeta|)^{3\kappa}} = (1 + |\zeta|)^{3b - s - 3\kappa}.
	\]
	Since $s > \frac{1}{2}$ and $\kappa > \frac{1}{2}$, we can choose $b$ such that $\frac{1}{2} < b < \kappa$ and $3b - s - 3\kappa \leq 0$. . Then $M(\zeta,\gamma)$ is bounded in this region.\\ 	
	\textbf{Case 2:} $|\gamma| \gg |\zeta|^{3}$. \\
	In this case, we have $|\gamma - \alpha\zeta^{3}| \geq |\gamma| - \alpha|\zeta|^{3} \gtrsim |\gamma|$ for large $|\gamma|$, so that
	\[
	1 + |\gamma - \alpha\zeta^{3}| \gtrsim 1 + |\gamma|.
	\]
	Hence,
	\[
	M(\zeta,\gamma) \lesssim \frac{(1 + |\gamma|)^{b}}{(1 + |\zeta|)^{s} (1 + |\gamma|)^{\kappa}} = \frac{1}{(1 + |\zeta|)^{s} (1 + |\gamma|)^{\kappa - b}}.
	\]
	Since $\kappa > b$ and $s > 0$, this is bounded as $|\gamma| \to \infty$ and $|\zeta| \to \infty$. 	Therefore, $M(\zeta,\gamma) \in L^{\infty}(\mathbb{R}_{\zeta} \times \mathbb{R}_{\gamma})$, and we conclude that
	\[
	\| \Lambda^{b} A^{-s}F^{\alpha}_{\kappa} \|_{L_{x,t}^{2}} \leq \| M \|_{L^{\infty}} \, \| f \|_{L_{\zeta}^{2} L_{\gamma}^{2}}.
	\]
\end{proof}

\begin{lemma}[Multilinear estimate]\label{tri} Let $k\geq 1$, $ 0<\alpha<1 $,  then there exist $ b>\frac{1}{2}$ and $-\frac{1}{2}<b'<- \frac{1}{4}$ such that for all $ \phi\in X^{s, b} $ and $ \varphi\in X^{s, b}_{\alpha} $ we have 
	\begin{eqnarray} \label{mil1}
		\| \partial_{x} \phi^{2 k+1}\|_{X^{s, b'}} \lesssim \|\phi\|^{2k+1}_{X^{s, b}} , 
	\end{eqnarray} 
	\begin{eqnarray} \label{mil2}
		\| \partial_{x} \varphi^{2 k+1}\|_{X^{s, b'}_{\alpha}} \lesssim \|\varphi\|^{2k+1}_{X^{s, b}_{\alpha}} , 
	\end{eqnarray}

	\begin{eqnarray} \label{mil3}
		\| \partial_{x} (\phi^k \varphi^{k+1})\|_{X^{s, b'}} \lesssim \|\phi\|^{k}_{X^{s, b}}\|\varphi\|^{k+1}_{X^{s, b}_{\alpha}}, 
	\end{eqnarray} 
	\begin{eqnarray}\label{mil4} 
		\| \partial_{x} (\phi^{k+1} \varphi^{k})\|_{X^{s, b'}_{\alpha}} \lesssim \|\phi\|^{k+1}_{X^{s, b}}\|\varphi\|^{k}_{X^{s, b}_{\alpha}}, 
	\end{eqnarray} 
	
	\begin{eqnarray} \label{mil5}
		\| \partial_{x} (\phi^{k+1} \varphi^k)\|_{X^{s, b'}} \lesssim \|\phi\|^{k+1}_{X^{s, b}}\|\varphi\|^{k}_{X^{s, b}_{\alpha}} , 
	\end{eqnarray} 
	\begin{eqnarray} \label{mil6}
		\| \partial_{x} (\phi^{k} \varphi^{k+1})\|_{X^{s, b'}_{\alpha}} \lesssim \|\phi\|^{k}_{X^{s, b}}\|\varphi\|^{k+1}_{X^{s, b}_{\alpha}} , 
	\end{eqnarray} 
	
	\begin{eqnarray} \label{mil7}
		\| \partial_{x} \phi^{k-1} \varphi^{k+2}\|_{X^{s, b'}} \lesssim \|\phi\|^{k-1}_{X^{s, b}}\|\varphi\|^{k+2}_{X^{s, b}_{\alpha}}, 
	\end{eqnarray} 
	\begin{eqnarray} \label{mil8}
		\| \partial_{x} \phi^{k+2} \varphi^{k-1}\|_{X^{s, b'}_{\alpha}} \lesssim \|\phi\|^{k+2}_{X^{s, b}}\|\varphi\|^{k-1}_{X^{s, b}_{\alpha}}, 
	\end{eqnarray}

	hold for any $s > \frac{1}{2}$.
\end{lemma}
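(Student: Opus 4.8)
The plan is to reduce the eight inequalities to a single model estimate and then run the standard duality-plus-multilinear scheme built on the mixed-norm bounds of Lemmas~\ref{2.3}--\ref{2.5}. First fix $b$ slightly larger than $\tfrac12$ and $b'\in\left(-\tfrac12,-\tfrac14\right)$ with $b-b'<1$, so that the companion estimate in Proposition~\ref{lem1} also applies. Since $\partial_x$ obeys the Leibniz rule and, crucially, the two Bourgain spaces $X^{s,b}$ and $X^{s,b}_\alpha$ obey \emph{exactly the same} auxiliary estimates — compare \eqref{102} with \eqref{103}, \eqref{106} with \eqref{107}, and Lemma~\ref{2.4} with Lemma~\ref{2.5} — every one of \eqref{mil1}--\eqref{mil8} is a special case of one generic bound
\[
\Big\|\,\partial_x\big(u_1\cdots u_{2k+1}\big)\,\Big\|_{Y}\ \lesssim_{k,s,\alpha}\ \prod_{i=1}^{2k+1}\|u_i\|_{Y_i},
\]
where $Y\in\{X^{s,b'},X^{s,b'}_\alpha\}$ and each $Y_i\in\{X^{s,b},X^{s,b}_\alpha\}$ is the space attached to the monomial at hand. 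By duality this is the same as bounding $\bigl|\iint\partial_x(u_1\cdots u_{2k+1})\,w\,dx\,dt\bigr|$ uniformly over $w$ in the unit ball of the dual space $X^{-s,-b'}$ (resp.\ its $\alpha$-variant), where the pertinent modulation exponent $-b'$ exceeds $\tfrac14$.

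The difficulty is the one-derivative loss caused by $\partial_x$. I would pass to the spacetime Fourier side, writing $\widetilde{u_i}=(1+|\zeta_i|)^{-s}(1+|\gamma_i-\beta_i\zeta_i^3|)^{-b}f_i$ with $\|f_i\|_{L^2}=\|u_i\|_{Y_i}$, and $\widetilde w$ analogously (here $\beta_i,\beta_Y\in\{1,\alpha\}$ are the dispersion coefficients of $u_i$ and of $Y$). On the hyperplane $\zeta=\sum_i\zeta_i$, $\gamma=\sum_i\gamma_i$ the integrand then carries the weight $(1+|\zeta|)^{s+1}\prod_i(1+|\zeta_i|)^{-s}$, and the elementary inequalities $1+|\zeta|\le1+\sum_i|\zeta_i|$ and $(1+|\zeta|)^{s}\lesssim_k\sum_i(1+|\zeta_i|)^{s}$ let me transfer the $A^{s}$ weight — and, after a Leibniz splitting, the derivative as well — onto the input carrying the largest frequency $N_1:=\max_i(1+|\zeta_i|)$. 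To recover the lost derivative one exploits the resonance identity
\[
\gamma-\beta_Y\zeta^3=\sum_{i=1}^{2k+1}\big(\gamma_i-\beta_i\zeta_i^3\big)-R,\qquad R:=\beta_Y\Big(\sum_{i=1}^{2k+1}\zeta_i\Big)^{3}-\sum_{i=1}^{2k+1}\beta_i\zeta_i^3,
\]
which forces the largest among the $2k+2$ modulations $(1+|\gamma-\beta_Y\zeta^3|),(1+|\gamma_1-\beta_1\zeta_1^3|),\dots$ to dominate $(1+|R|)$. Here the algebra of $R$ is decisive: when the top-frequency index $i_0$ carries a dispersion coefficient different from $\beta_Y$, the leading term of $R$ is $(\beta_Y-\beta_{i_0})\zeta_{i_0}^3$, so typically $(1+|R|)\sim|1-\alpha|\,N_1^{3}$ and the derivative is absorbed with room to spare; thus only the symbol-matched interactions — all of \eqref{mil1}--\eqref{mil2} together with the matched sub-cases of \eqref{mil3}--\eqref{mil8} — require the detailed resonance analysis, and there $R$ degenerates only on a thin set (for $2k+1=3$, $R=3\beta_Y(\zeta_1+\zeta_2)(\zeta_2+\zeta_3)(\zeta_3+\zeta_1)$; for larger $2k+1$ the resonant configurations form a lower-dimensional set of pairwise cancellations).

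The estimate then splits into two regimes. In the \textbf{non-resonant} regime some modulation is $\gtrsim N_1$: if it is an input modulation, the factor $(1+|\gamma_i-\beta_i\zeta_i^3|)^{-b}$ yields a gain $N_1^{-2b}$ with $2b>1$, which more than pays for the derivative; if only the output modulation is large, one additionally uses the dual of the local-smoothing estimate \eqref{102} (equivalently $\|\partial_xv\|_{L^4_xL^2_t}\lesssim\|v\|_{X^{1/2,b}}\le\|v\|_{X^{s,b}}$, valid since $s\ge\tfrac12$) together with $b-b'<1$ to close the residual power of $N_1$. In both cases one concludes by Hölder's inequality in mixed $L^p_xL^q_t$ norms, placing the distinguished (highest-frequency) factor and the dual function $w$ in $L^4_xL^2_t$ via \eqref{102}--\eqref{103}, at most one further factor in $L^2_xL^\infty_t$ via Lemma~\ref{2.4}/\ref{2.5}, and the remaining $\le2k$ factors in $L^\infty_xL^\infty_t$ via \eqref{106}--\eqref{107}, after checking that the Hölder exponents add up to one in $x$ and in $t$. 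In the \textbf{resonant} regime all modulations are $\lesssim N_1$; then $(1+|R|)\lesssim N_1$, the structure of $R$ pins the frequencies down, and a direct $L^2$ argument applies: the non-distinguished factors are spectrally concentrated and controlled in $L^\infty_{x,t}$ (resp.\ $L^\infty_tL^2_x$) by Lemma~\ref{2.4} and \eqref{106}--\eqref{107} precisely because $s>\tfrac12$, the distinguished factor absorbs the derivative through \eqref{102}, and the dispersion of the output over a modulation interval of length $\sim N_1^{2}N_2$ (with $N_2$ the second largest frequency) is rendered square-summable by the weight $(1+|\gamma-\beta_Y\zeta^3|)^{b'}$ with $b'<-\tfrac14$. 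Summing the resulting geometric series in the dyadic size of $N_1$ — convergent thanks to the strict inequalities $s>\tfrac12$, $b>\tfrac12$, $b-b'<1$ — and over the finitely many degenerate sub-configurations delivers \eqref{mil1}--\eqref{mil8}; the hypothesis $s>\tfrac12$ enters exactly here, being the threshold that makes \eqref{106}--\eqref{107} and Lemmas~\ref{2.4}--\ref{2.5} available. The step I expect to be the genuine obstacle is organizing this $(2k+1)$-linear case analysis cleanly and, within it, balancing the full derivative $\partial_x$ against only a half-derivative of smoothing in the near-cancellation regime; everything else is routine Hölder combined with the cited mixed-norm estimates.
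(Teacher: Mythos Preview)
Your scheme would work in principle, but it is substantially heavier than what the paper does, and the part you flag as the ``genuine obstacle'' --- the resonant regime --- is precisely the part the paper's argument never has to enter.

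The paper runs no resonance/modulation case analysis at all. After duality and the frequency-region split (the six permutations of the ordering of the input frequencies in the trilinear case, $(2k+2)!$ regions in general), the entire derivative loss is handled by a single pointwise inequality: in the region where $|\zeta_{1}|$ is the largest input frequency one has $|\zeta|\le(2k+1)|\zeta_{1}|$ on the convolution hyperplane, whence
\[
(1+|\zeta|)^{s}\,|\zeta|\,(1+|\zeta_{1}|)^{-s}\ \lesssim\ |\zeta|\ \lesssim\ |\zeta|^{1/2}\,|\zeta_{1}|^{1/2}.
\]
This splits the full derivative into two half-derivatives --- one on the dual function $m$ and one on the highest-frequency input --- and \emph{both} are recovered by the same local-smoothing estimate \eqref{102}/\eqref{103}: on the dual side the exponent is $-b'>\tfrac14$ (this is exactly why the lemma requires $b'<-\tfrac14$), and on the input side the exponent is $b>\tfrac12>\tfrac14$. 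The remaining factors are placed in $L^{2}_{x}L^{\infty}_{t}$ (one of them, via Lemma~\ref{2.4} or~\ref{2.5}) and in $L^{\infty}_{x}L^{\infty}_{t}$ (the rest, via \eqref{106}/\eqref{107}); the H\"older exponents $\tfrac14+\tfrac14+\tfrac12+0+\cdots+0=1$ in $x$ and $\tfrac12+\tfrac12+0+\cdots+0=1$ in $t$ close the estimate. No modulation weight is ever spent, no dyadic summation is needed, and the mixed estimates \eqref{mil3}--\eqref{mil8} follow by the identical computation because the auxiliary lemmas are symmetric in the two dispersion symbols.

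What your approach would buy is extensibility to low regularity ($s\le\tfrac12$), where the $L^{\infty}$-type bounds \eqref{106}/\eqref{107} and Lemmas~\ref{2.4}--\ref{2.5} fail and one is genuinely forced into the modulation machinery. What the paper's approach buys is that at $s>\tfrac12$ the whole resonant/non-resonant dichotomy --- and with it the fragile $(2k+1)$-linear resonance bookkeeping you anticipate --- simply evaporates: the derivative is paid for by smoothing alone, never by the resonance function $R$.
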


\begin{proof}
	First of all, for $  i= 1,2,...,2k+1$   and $j=1,2,...,2k+1 $, we define
	\begin{align*}
		f_{i} (\zeta, \gamma ) = (1+  | \zeta |)^{s} (1+  |\gamma - \zeta^{3} |)^{b} | \widehat{\phi_{i} }(\zeta,\gamma )|  \\
		g_{j} (\zeta, \gamma ) = (1+  | \zeta |)^{s} (1+  |\gamma - \alpha\zeta^{3} |)^{b}| \widehat{\varphi_{j} }(\zeta,\gamma )|.
	\end{align*}
	We begin by proving (\ref{mil1}) and (\ref{mil2}); the proofs of the remaining inequalities are similar.\\
	We first prove the case  $k= 1$, as the proof for general $ 2k+1 $ will then be more transparent, that means we prove 
	\begin{equation*}
		\| \partial_{x} \phi_{1}  \phi_{2} \phi_{3}  \|_{X^{s, b'}}\leq
		C \| \phi_{1}  \|_{X^{s, b}} \|\phi_{2} \|_{X^{s, b}}  \|\phi_{3} \|_{X^{s, b}}
	\end{equation*}
	\begin{equation*}
		\| \partial_{x} \varphi_{1}  \varphi_{2} \varphi_{3}  \|_{X^{s, b'}_{\alpha}}\leq
		C \| \varphi_{1}  \|_{X^{s, b}_{\alpha}} \|\varphi_{2} \|_{X^{s, b}_{\alpha}}  \|\varphi_{3} \|_{X^{s, b}_{\alpha}}.
	\end{equation*}
	\textbf{\underline{Inequality (\ref{mil1}):}} We have	
	\begin{align*}
		\| \partial_{x} (\phi_{1}   \phi_{2}  \phi_{3} )  \|_{X^{s, b'}}&=  \left\|  (1+  | \zeta |)^{s} (1+  |\gamma - \zeta^{3} |)^{b'}  | \widehat{\partial_{x} (\phi_{1}  \phi_{2}  \phi_{3}) }(\zeta,\gamma )|     \right\|_{L^{2}_{\zeta} L^{2}_{\gamma} }
		\\	\\& = \left\|  (1+  | \zeta |)^{s} (1+  |\gamma - \zeta^{3} |)^{b'}  |   \zeta| | \widehat{(\phi_{1}  \phi_{2}  \phi_{3})  }(\zeta,\gamma )|     \right\|_{{L^{2}_{\zeta} L^{2}_{\gamma} }}\\ \\&	
		= \left\| (1+  | \zeta |)^{s} (1+  |\gamma - \zeta^{3} |)^{b'}    | \zeta|~~ | \widehat{\phi_{1} }\ast \widehat{\phi_{2} } \ast \widehat{\phi_{3} }(\zeta,\gamma )|     \right\|_{L^{2}_{\zeta} L^{2}_{\gamma} } \\ \\&
		= \|  (1+  | \zeta |)^{s} (1+  |\gamma - \zeta^{3} |)^{b'}    | \zeta |  \int_{\mathbb{R}^{4}}  \widehat{\phi_{1} }(\zeta_{1},\gamma_{1} ) \widehat{\phi_{2} }(\zeta-\zeta_{2},\gamma - \gamma_{2} )\\ \\& \quad \times
		\widehat{\phi_{3} }(\zeta_{2}-\zeta_{1},\gamma_{2}-\gamma_{1} )| d\zeta_{1} d\gamma_{1} d\zeta_{2} d\gamma_{2}   \|_{{L^{2}_{\zeta} L^{2}_{\gamma} }}\\ \\&	
		=  \bigg\|  (1+  | \zeta |)^{s} (1+  |\gamma - \zeta^{3} |)^{b'}   | \zeta |  \int_{\mathbb{R}^{4}} ~~
		\left (\frac{(1+  | \zeta_{1} |)^{-s}   \widehat{f_{1}}(\zeta_{1},\gamma_{1} )}{(1+  | \gamma - \zeta^{3} |)^{b}}\right) \\ \\& \quad \times
		\left (\frac{(1+  | \zeta-\zeta_{2}  |)^{-s}   \widehat{f_{2}}(\zeta-\zeta_{2},\gamma - \gamma_{2})}{(1+  | ( \gamma - \gamma_{2}  ) - (\zeta-\zeta_{2})^{3} |)^{b}} \right)\\ \\&\quad \times
		\left (\frac{(1+  |  \zeta_{2} -\zeta_{1} |)^{-s}  \widehat{f_{3}}(\zeta_{2} -\zeta_{1},\gamma_{2}-\gamma_{1} )}{(1+  |\gamma_{2}-\gamma_{1}  -(\zeta_{2} -\zeta_{1} )^{3} |)^{b}}\right)   d \mu \bigg\|_{{L^{2}_{\zeta} L^{2}_{\gamma} }},	
	\end{align*}
	where $    d \mu =   d\zeta_{1} d\gamma_{1} d\zeta_{2} d\gamma_{2}  d\zeta d\gamma$.	\\
	Using duality, we prove this estimate, where $  m(\zeta,\gamma) $  is a positive function in $  L^{2}( \mathbb{R}^{2})$ with norm $ \| m\|_{ L^{2}( \mathbb{R}^{2})}=1   $, then 
	\begin{align*}
		\| \partial_{x} (\phi_{1}  \phi_{2} \phi_{3} ) \|_{X^{s, b'}}	& \leqslant \int_{\mathbb{R}^{6}}\frac{(1+  | \zeta |)^{s} | \zeta | m(\zeta,\gamma)} {(1+  |\gamma - \zeta^{3} |)^{-b'}}
		\frac{(1+  | \zeta_{1} |)^{-s} f_{1} (\zeta_{1},\gamma_{1})} {(1+  |\gamma_{1} - \zeta_{1}^{3} |)^{b}} \\ \\&
		\frac{(1+  | \zeta-\zeta_{2}  |)^{-s} f_{2}(\zeta-\zeta_{2},\gamma-\gamma_{2})} {(1+  |\gamma -\gamma_{2} - (\zeta-\zeta_{2})^{3} |)^{b}}~~
		\\	\\&
		\frac{(1+  | \zeta_{2}-\zeta_{1}  |)^{-s} f_{3}(\zeta_{2}-\zeta_{1},\gamma_{2}-\gamma_{1})} {(1+  |\gamma_{2}-\gamma_{1} - (\zeta_{2}-\zeta_{1})^{3} |)^{b}}  d \mu.
	\end{align*}
	Now, split the Fourier space into six regions as follow 
	
	\[
	\text{region 1:}\quad |\zeta-\zeta_{2}  | \leq | \zeta_{2}-\zeta_{1}  |\leq | \zeta_{1} |, \qquad
	\text{region 2:}\quad	| \zeta-\zeta_{2}  | \leq| \zeta_{1} | \leq | \zeta_{2}-\zeta_{1}  |,  \]
	\[\text{region 3:}\quad | \zeta_{1} |  \leq | \zeta_{2}-\zeta_{1}  |\leq| \zeta-\zeta_{2}  |, \qquad 
	\text{region 4:}\quad	| \zeta_{1} |  \leq  | \zeta-\zeta_{2}  | \leq | \zeta_{2}-\zeta_{1},  | \]
	\[\text{region 5:}\quad |  \zeta_{2}-\zeta_{1}  |\leq| \zeta-\zeta_{2}   \leq | \zeta_{1} |, \qquad
	\text{region 6:}\quad |  \zeta_{2}-\zeta_{1}  | \leq | \zeta_{1} | \leq | \zeta-\zeta_{2}  |. \]
	\textbf{We begin by the region 1:} 
	$$
	|\zeta-\zeta_{2}  | \leq | \zeta_{2}-\zeta_{1}  |\leq | \zeta_{1} |,
	$$
	so
	\begin{equation}\label{eq55}
		(1+ |\zeta-\zeta_{2}  |)^{-s} \geq (1+| \zeta_{2}-\zeta_{1}  | )^{-s} \geq (1+ | \zeta_{1} | )^{-s}.
	\end{equation}
	We assume that $  | \zeta| \leq 1  $	or    $  | \zeta| \geq 1  $. \\
	\textbf{Firstly, case $  | \zeta| \geq 1   $:}
	$$ (1+ | \zeta| )^{s} \leq  (| \zeta|+ | \zeta| )^{s}= 2^{s}| \zeta|^{s} = C  | \zeta|^{s}.   $$
	By the last inequality and $  (\ref{eq55})$, we obtain 
	\begin{align*}
		\| \partial_{x} (\phi_{1}   \phi_{2}  \phi_{3} )  \|_{X^{s, b'}} & \leqslant\int_{\mathbb{R}^{6}}\frac{(1+  | \zeta |)^{s} | \zeta | m(\zeta,\gamma)} {(1+  |\gamma - \zeta^{3} |)^{-b'}}~
		\frac{(1+  | \zeta_{1} |)^{-s} f_{1} (\zeta_{1},\gamma_{1})} {(1+  |\gamma_{1} - \zeta_{1}^{3} |)^{b}}\\ \\&
		\frac{(1+  | \zeta-\zeta_{2}  |)^{-s} f_{2}(\zeta-\zeta_{2},\gamma-\gamma_{2})} {(1+  |\gamma -\gamma_{2} - (\zeta-\zeta_{2})^{3} |)^{b}}
		\frac{(1+  | \zeta_{2}-\zeta_{1}  |)^{-s} f_{3}(\zeta_{2}-\zeta_{1},\gamma_{2}-\gamma_{1})} {(1+  |\gamma_{2}-\gamma_{1} - (\zeta_{2}-\zeta_{1})^{3} |)^{b}}d\mu
		\\ \\&
		\leq C
		\int_{\mathbb{R}^{6}}\frac{| \zeta |^{s} | \zeta | m(\zeta,\gamma)} {(1+  |\gamma - \zeta^{3} |)^{-b'}}~
		\frac{(1+  | \zeta_{1} |)^{-s} f_{1} (\zeta_{1},\gamma_{1})} {(1+  |\gamma_{1} - \zeta_{1}^{3} |)^{b}}\\ \\&
		\frac{(1+  | \zeta-\zeta_{2}  |)^{-s} f_{2}(\zeta-\zeta_{2},\gamma-\gamma_{2})} {(1+  |\gamma -\gamma_{2} - (\zeta-\zeta_{2})^{3} |)^{b}}~
		\frac{(1+  | \zeta_{2}-\zeta_{1}  |)^{-s} f_{3}(\zeta_{2}-\zeta_{1},\gamma_{2}-\gamma_{1})} {(1+  |\gamma_{2}-\gamma_{1} - (\zeta_{2}-\zeta_{1})^{3} |)^{b}} d\mu \\ \\&
		\leq C
		\int_{\mathbb{R}^{6}}\frac{| \zeta |^{s+1}  m(\zeta,\gamma)} {(1+  |\gamma - \zeta^{3} |)^{-b'}}~~
		\frac{(1+  | \zeta_{1} |)^{-s} f_{1} (\zeta_{1},\gamma_{1})} {(1+  |\gamma_{1} - \zeta_{1}^{3} |)^{b}}\\ \\&
		\frac{(1+  | \zeta-\zeta_{2}  |)^{-s} f_{2}(\zeta-\zeta_{2},\gamma-\gamma_{2})} {(1+  |\gamma -\gamma_{2} - (\zeta-\zeta_{2})^{3} |)^{b}}
		\frac{(1+  | \zeta_{2}-\zeta_{1}  |)^{-s} f_{3}(\zeta_{2}-\zeta_{1},\gamma_{2}-\gamma_{1})} {(1+  |\gamma_{2}-\gamma_{1} - (\zeta_{2}-\zeta_{1})^{3} |)^{b}} d\mu.
	\end{align*}
	By
	$$
	| \zeta |^{s+1}(1+| \zeta_{1} |)^{-s} \leq | \zeta |^{s+1} | \zeta_{1} |^{-s}\leq C | \zeta |^{\frac{1}{2}} | \zeta_{1}  |^{\frac{1}{2}},
	$$
	we get
	\begin{align*}
		\| \partial_{x} (\phi_{1}   \phi_{2}  \phi_{3} )  \|_{X^{s, b'}} & \leq C
		\int_{\mathbb{R}^{6}}\frac{| \zeta |^{\frac{1}{2}}  m(\zeta,\gamma)} {(1+  |\gamma - \zeta^{3} |)^{-b'}}
		\frac{(1+  | \zeta_{1} |)^{\frac{1}{2}} f_{1} (\zeta_{1},\gamma_{1})} {(1+  |\gamma_{1} - \zeta_{1}^{3} |)^{b}}
		\\ \\&\frac{(1+  | \zeta-\zeta_{2}  |)^{-s} f_{2}(\zeta-\zeta_{2},\gamma-\gamma_{2})} {(1+  |\gamma -\gamma_{2} - (\zeta-\zeta_{2})^{3} |)^{b}}
		\frac{(1+  | \zeta_{2}-\zeta_{1}  |)^{-s} f_{3}(\zeta_{2}-\zeta_{1},\gamma_{2}-\gamma_{1})} {(1+  |\gamma_{2}-\gamma_{1} - (\zeta_{2}-\zeta_{1})^{3} |)^{b}} d\mu.
	\end{align*}
	
	We suppose that
	\begin{align*}
		\widehat{A^{\frac{1}{2}} M_{-b'}}(\zeta,\gamma)& = \frac{| \zeta |^{\frac{1}{2}}  m(\zeta,\gamma)} {(1+  |\gamma - \zeta^{3} |)^{-b'}}\\ \\
		\widehat{A^{\frac{1}{2}} F^{1}_{b}}(\zeta_{1},\gamma_{1})  &=  \frac{(1+  | \zeta_{1} |)^{\frac{1}{2}} f_{1} (\zeta_{1},\gamma_{1})} {(1+  |\gamma_{1} - \zeta_{1}^{3} |)^{b}}\\ \\
		\widehat{A^{-s} F^{2}_{b}}(\zeta-\zeta_{2},\gamma-\gamma_{2}) &=\frac{(1+  | \zeta-\zeta_{2}  |)^{-s} f_{2}(\zeta-\zeta_{2},\gamma-\gamma_{2})} {(1+  |\gamma -\gamma_{2} - (\zeta-\zeta_{2})^{3} |)^{b}}\\ \\
		\widehat{A^{-s} F^{3}_{b}}(\zeta_{2}-\zeta_{1},\gamma_{2}-\gamma_{1})&= \frac{(1+  | \zeta_{2}-\zeta_{1}  |)^{-s} f_{3}(\zeta_{2}-\zeta_{1},\gamma_{2}-\gamma_{1})} {(1+  |\gamma_{2}-\gamma_{1} - (\zeta_{2}-\zeta_{1})^{3} |)^{b}}.
	\end{align*}
	Then
	\begin{align*}
		&\int_{\mathbb{R}^{6}}\frac{( | \zeta |)^{\frac{1}{2}}  m(\zeta,\gamma)} {(1+  |\gamma - \zeta^{3} |)^{-b'}}~~~~
		\frac{(1+  | \zeta_{1} |)^{\frac{1}{2}} f_{1} (\zeta_{1},\gamma_{1})} {(1+  |\gamma_{1} - \zeta_{1}^{3} |)^{b}}~
		\frac{(1+  | \zeta-\zeta_{2}  |)^{-s} f_{2}(\zeta-\zeta_{2},\gamma-\gamma_{2})} {(1+  |\gamma -\gamma_{2} - (\zeta-\zeta_{2})^{3} |)^{b}}\\ \\&
		\frac{(1+  | \zeta_{2}-\zeta_{1}  |)^{-s} f_{3}(\zeta_{2}-\zeta_{1},\gamma_{2}-\gamma_{1})} {(1+  |\gamma_{2}-\gamma_{1} - (\zeta_{2}-\zeta_{1})^{3} |)^{b}} d\mu \\ \\&
		=	\int_{\mathbb{R}^{6}} \widehat{A^{\frac{1}{2}} M_{-b'}}(\zeta,\gamma)   \widehat{A^{\frac{1}{2}} F_{b}}(\zeta_{1},\gamma_{1})
		\widehat{A^{-s} G^{1}_{b}}(\zeta-\zeta_{2},\gamma-\gamma_{2})
		\widehat{A^{-s} G^{2}_{b}}(\zeta_{2}-\zeta_{1},\gamma_{2}-\gamma_{1})
		d\mu\\ \\&
		= \int_{\mathbb{R}^{2}}  \left(  \widehat{A^{\frac{1}{2}} M_{-b'}}(\zeta,\gamma) \right)\times\\ \\&  \left(  \int_{\mathbb{R}^{4}}  \widehat{A^{\frac{1}{2}} F_{b}}(\zeta_{1},\gamma_{1})
		\widehat{A^{-s} G^{1}_{b}}(\zeta-\zeta_{2},\gamma-\gamma_{2})
		\widehat{A^{-s} G^{2}_{b}}(\zeta_{2}-\zeta_{1},\gamma_{2}-\gamma_{1}) d\zeta_{1} d\gamma_{1} d\zeta_{2} d\gamma_{2} \right)
		d\zeta d\gamma
		\\ \\&=    \int_{\mathbb{R}^{2}}  \left(  \widehat{A^{\frac{1}{2}} M_{-b'}}(\zeta,\gamma) \right)  \left(  \left(  \widehat{A^{\frac{1}{2}} F^{1}_{b}}\star
		\widehat{A^{-s} F^{2}_{b}}\star
		\widehat{A^{-s} F^{3}_{b}} \right) (\zeta,\gamma) \right)
		d\zeta d\gamma
		\\ \\& =   \int_{\mathbb{R}^{2}}  \left(  \widehat{A^{\frac{1}{2}} M_{-b'}}(\zeta,\gamma) \right)  \left(  \left(  \widehat{A^{\frac{1}{2}} F^{1}_{b}\cdot A^{-s} F^{2}_{b}\cdot A^{-s} F^{3}_{b}}\right) (\zeta,\gamma) \right)
		d\zeta d\gamma
		\\ \\& =   \int_{\mathbb{R}^{2}}  A^{\frac{1}{2}}  M_{-b'} \left(  A^{\frac{1}{2}} F^{1}_{b}\cdot A^{-s} F^{2}_{b}\cdot A^{-s} F^{3}_{b} \right) dx dt.
	\end{align*}
	By using Cauchy-Schwarz's inequality  for the variables $x$  and $t$, we get
	\begin{align*}
		& \| \partial_{x} (\phi_{1}   \phi_{2}  \phi_{3} )  \|_{X^{s, b'}} \leq C  \| A^{\frac{1}{2}} M_{-b'} \|_{L_{x}^{4} L_{t}^{2} }   \| A^{\frac{1}{2}} F^{1}_{b} \|_{L_{x}^{4} L_{t}^{2} }  \| A^{-s}
		F^{2}_{b} \|_{L_{x}^{2} L_{t}^{\infty}}  \| A^{\frac{1}{2}} F^{3}_{b} \|_{L_{x}^{\infty} L_{t}^{\infty} }.
	\end{align*}
	Hence by Lemmas \ref{2.3} and \ref{2.4}
	\begin{align*}
		\| \partial_{x} (\phi_{1}   \phi_{2}  \phi_{3} )  \|_{X^{s, b'}} & \leq  C  \| m \|_{L_{\zeta}^{2} L_{\gamma}^{2} }   \| f \|_{L_{\zeta}^{2} L_{\gamma}^{2} }  \| f_{2} \|_{L_{\zeta}^{2} L_{\gamma}^{2} }   \| f_{3} \|_{L_{\zeta}^{2} L_{\gamma}^{2} } \\ \\ &
		\leq C\| \phi_{1}\|_{X^{s, b}}  \| \phi_{2} \|_{X^{s, b}}  \| \phi_{3} \|_{X^{s, b}}.
	\end{align*}
	\textbf{Secondly for  the case $  |\zeta| \leq 1 $:} We have
	\begin{align*}
		(1+  | \zeta |)^{s} | \zeta | (1+  | \zeta_{1} |)^{-s}  \leq  C   (1+  | \zeta |)^{^{\frac{1}{2}}} (1+  | \zeta_{1} |)^{^{\frac{1}{2}}},
	\end{align*}
	then
	\begin{align*}	
		\| \partial_{x} (\phi_{1}   \phi_{2}  \phi_{3} )  \|_{X^{s, b'}}& \leq \int_{\mathbb{R}^{6}}\frac{(1+  | \zeta |)^{s} | \zeta | m(\zeta,\gamma)} {(1+  |\gamma - \zeta^{3} |)^{-b'}}~
		\frac{(1+  | \zeta_{1} |)^{-s} f_{1} (\zeta_{1},\gamma_{1})} {(1+  |\gamma_{1} - \zeta_{1}^{3} |)^{b}}\\ \\&\times
		\frac{(1+  | \zeta-\zeta_{2}  |)^{-s} f_{2}(\zeta-\zeta_{2},\gamma-\gamma_{2})} {(1+  |\gamma -\gamma_{2} - (\zeta-\zeta_{2})^{3} |)^{b}}
		\frac{(1+  | \zeta_{2}-\zeta_{1}  |)^{-s} f_{3}(\zeta_{2}-\zeta_{1},\gamma_{2}-\gamma_{1})} {(1+  |\gamma_{2}-\gamma_{1} - (\zeta_{2}-\zeta_{1})^{3} |)^{b}}d\mu
		\\ \\&
		\leq C
		\int_{\mathbb{R}^{6}}\frac{  (1+ | \zeta  |)^{\frac{1}{2}} m(\zeta,\gamma)} {(1+  |\gamma - \zeta^{3} |)^{-b'}}~
		\frac{(1+  | \zeta_{1} |)^{\frac{1}{2}} f_{1} (\zeta_{1},\gamma_{1})} {(1+  |\gamma_{1} - \zeta_{1}^{3} |)^{b}}\\ \\& \quad\times
		\frac{(1+  | \zeta-\zeta_{2}  |)^{-s} f_{2}(\zeta-\zeta_{2},\gamma-\gamma_{2})} {(1+  |\gamma -\gamma_{2} - (\zeta-\zeta_{2})^{3} |)^{b}}~
		\frac{(1+  | \zeta_{2}-\zeta_{1}  |)^{-s} f_{3}(\zeta_{2}-\zeta_{1},\gamma_{2}-\gamma_{1})} {(1+  |\gamma_{2}-\gamma_{1} - (\zeta_{2}-\zeta_{1})^{3} |)^{b}} d\mu.
	\end{align*}
	Then, by the inner product, we have
	\begin{align*}
		\| \partial_{x} (\phi_{1}   \phi_{2}  \phi_{3} )  \|_{X^{s, b'}} &
		\leq C\langle A^{\frac{1}{2}} M_{-b'};  A^{\frac{1}{2}} F^{1}_{b}\cdot A^{-s} F^{2}_{b}\cdot A^{-s} F^{3}_{b}  \rangle   \\ &
		\leq  C  \| A^{\frac{1}{2}} M_{-b'} \|_{L_{x}^{4} L_{t}^{2} }   \| A^{\frac{1}{2}} F^{1}_{b} \|_{L_{x}^{4} L_{t}^{2} }  \| A^{-s}
		F^{2}_{b} \|_{L_{x}^{2} L_{t}^{\infty}}  \| A^{-s} F^{3}_{b} \|_{L_{x}^{\infty} L_{t}^{\infty} }.
	\end{align*}
	Hence by Lemmas \ref{2.3} and \ref{2.4}
	\begin{align*}
		\| \partial_{x} (\phi_{1}   \phi_{2}  \phi_{3} )  \|_{X^{s, b'}} & \leq  C  \| m \|_{L_{\zeta}^{2} L_{\gamma}^{2} }   \| f_1 \|_{L_{\zeta}^{2} L_{\gamma}^{2} }  \| f_{2} \|_{L_{\zeta}^{2} L_{\gamma}^{2} }   \| f_{3} \|_{L_{\zeta}^{2} L_{\gamma}^{2} } \\ \\ &
		\leq C\| \phi_{1}\|_{X^{s, b}}  \| \phi_{2} \|_{X^{s, b}}  \| \phi_{3} \|_{X^{s, b}}.
	\end{align*}
	The inequalities in the other five regions are proved in a similar manner.\\
	The case $  k\geq  2 $ is handled virtually identically. The only difference is that  we need to split the Fourier space in  $((2k+1)+1)!$.\\
	We want to prove that
	\begin{align*}
		\bigg\| \partial_{x}\prod _{i=1}^{2k+1} \phi_{i} \bigg\|_{X^{s, b'}}\leq
		C  \prod _{i=1}^{2k+1}\|  \phi_{i}  \|_{X^{s, b}},
	\end{align*}
	We have 
	\begin{align*}
		\bigg\| \partial_{x}\prod _{i=1}^{2k+1} \phi_{i} \bigg\|_{X^{s, b'}} & =  \bigg\|  (1+  | \zeta |)^{s} (1+  |\gamma - \zeta^{3} |)^{b'} \widehat{ \partial_{x}\prod _{i=1}^{2k+1} \phi_{i}  }(\zeta, \gamma  ) \bigg\|_{L^{2}_{\zeta} L^{2}_{\gamma} },
		\\ &=  \bigg\|  (1+  | \zeta |)^{s} (1+  |\gamma - \zeta^{3} |)^{b'} |\zeta|      \widehat{ \prod _{i=1}^{2k+1} \phi_{i}  }(\zeta, \gamma  ) \bigg\|_{L^{2}_{\zeta} L^{2}_{\gamma} }.
	\end{align*}
	By the same way,  by the inner product, we have
	\begin{align*}
		\bigg\| \partial_{x}\prod _{i=1}^{2k+1} \phi_{i} \bigg\|_{X^{s, b'}}	& \leq C  \langle \widehat{A^{\frac{1}{2}} M_{-b'}}; \widehat{A^{\frac{1}{2}} F^{1}_{b}}\star  \widehat{A^{-s} F^{2}_{b}}  \star  \prod _{i=3}^{2k+1} \widehat{A^{-s} F^{i}_{b}}\rangle \\ &
		\leq C  \langle \widehat{A^{\frac{1}{2}} M_{-b'}}; \widehat{A^{\frac{1}{2}} F^{1}_{b}\cdot A^{-s} F^{2}_{b}  \prod _{i=3}^{2k+1} A^{-s} F^{i}_{b}}\rangle \\ &
		\leq  C  \| A^{\frac{1}{2}} M_{-b'} \|_{L_{x}^{4} L_{t}^{2} }   \| A^{\frac{1}{2}} F^{1}_{b} \|_{L_{x}^{4} L_{t}^{2} } \| A^{-s} F^{2}_{b}  \|_{L_{x}^{2} L_{t}^{\infty}}  \bigg\| \prod _{i=3}^{2k+1}A^{-s}
		F^{i}_{b} \bigg\|_{L_{x}^{\infty} L_{t}^{\infty}}\\ &
		\leq  C  \prod _{i=1}^{2k+1}\|  \phi_{i}  \|_{X^{s, b}}.
	\end{align*}
	\textbf{\underline{Inequality (\ref{mil2}):}} We have	
	
	\begin{align*}
		\| \partial_{x} (\varphi_{1}  \varphi_{2} \varphi_{3} ) \|_{X_{\alpha}^{s, b'}}	& \leq \int_{\mathbb{R}^{6}}\frac{(1+  | \zeta |)^{s} | \zeta | m(\zeta,\gamma)} {(1+  |\gamma - \alpha\zeta^{3} |)^{-b'}}
		\frac{(1+  | \zeta_{1} |)^{-s} g_{1} (\zeta_{1},\gamma_{1})} {(1+  |\gamma_{1} - \alpha\zeta_{1}^{3} |)^{b}} \\ \\&
		\frac{(1+  | \zeta-\zeta_{2}  |)^{-s} g_{2}(\zeta-\zeta_{2},\gamma-\gamma_{2})} {(1+  |\gamma -\gamma_{2} - \alpha(\zeta-\zeta_{2})^{3} |)^{b}}
		\frac{(1+  | \zeta_{2}-\zeta_{1}  |)^{-s} g_{3}(\zeta_{2}-\zeta_{1},\gamma_{2}-\gamma_{1})} {(1+  |\gamma_{2}-\gamma_{1} - \alpha(\zeta_{2}-\zeta_{1})^{3} |)^{b}}  d \mu.
	\end{align*}
	\textbf{We begin by the region 1:} We assume that $  | \zeta| \leq 1  $	or    $  | \zeta| \geq 1  $. \\
	\textbf{Firstly, case $  | \zeta| \geq 1   $:}
	\begin{align*}
		\| \partial_{x} (\varphi_{1}  \varphi_{2} \varphi_{3} ) \|_{X_{\alpha}^{s, b'}} & \leq C
		\int_{\mathbb{R}^{6}}\frac{| \zeta |^{\frac{1}{2}}  m(\zeta,\gamma)} {(1+  |\gamma - \alpha\zeta^{3} |)^{-b'}}
		\frac{(1+  | \zeta_{1} |)^{\frac{1}{2}} g_{1} (\zeta_{1},\gamma_{1})} {(1+  |\gamma_{1} - \alpha\zeta_{1}^{3} |)^{b}}\\& \quad\times
		\frac{(1+  | \zeta-\zeta_{2}  |)^{-s} g_{2}(\zeta-\zeta_{2},\gamma-\gamma_{2})} {(1+  |\gamma -\gamma_{2} - \alpha(\zeta-\zeta_{2})^{3} |)^{b}}
		\frac{(1+  | \zeta_{2}-\zeta_{1}  |)^{-s} g_{3}(\zeta_{2}-\zeta_{1},\gamma_{2}-\gamma_{1})} {(1+  |\gamma_{2}-\gamma_{1} - \alpha(\zeta_{2}-\zeta_{1})^{3} |)^{b}} d\mu.
	\end{align*}
	
	We suppose that
	\begin{align*}
		\widehat{A^{\frac{1}{2}} M^{\alpha}_{-b'}}(\zeta,\gamma)& = \frac{| \zeta |^{\frac{1}{2}}  m(\zeta,\gamma)} {(1+  |\gamma - \alpha\zeta^{3} |)^{-b'}}\\ \\
		\widehat{A^{\frac{1}{2}} G^{1}_{\alpha,b}}(\zeta_{1},\gamma_{1})  &=  \frac{(1+  | \zeta_{1} |)^{\frac{1}{2}} g_{1} (\zeta_{1},\gamma_{1})} {(1+  |\gamma_{1} - \alpha\zeta_{1}^{3} |)^{b}}\\ \\
		\widehat{A^{-s} G^{2}_{\alpha,b}}(\zeta-\zeta_{2},\gamma-\gamma_{2}) &=\frac{(1+  | \zeta-\zeta_{2}  |)^{-s} g_{2}(\zeta-\zeta_{2},\gamma-\gamma_{2})} {(1+  |\gamma -\gamma_{2} - \alpha(\zeta-\zeta_{2})^{3} |)^{b}}\\ \\
		\widehat{A^{-s} G^{3}_{\alpha,b}}(\zeta_{2}-\zeta_{1},\gamma_{2}-\gamma_{1})&= \frac{(1+  | \zeta_{2}-\zeta_{1}  |)^{-s} g_{3}(\zeta_{2}-\zeta_{1},\gamma_{2}-\gamma_{1})} {(1+  |\gamma_{2}-\gamma_{1} - \alpha(\zeta_{2}-\zeta_{1})^{3} |)^{b}}.
	\end{align*}
	Then

	\begin{align*}
		&\int_{\mathbb{R}^{6}}\frac{| \zeta |^{\frac{1}{2}}  m(\zeta,\gamma)} {(1+  |\gamma - \alpha\zeta^{3} |)^{-b'}}
		\frac{(1+  | \zeta_{1} |)^{\frac{1}{2}} g_{1} (\zeta_{1},\gamma_{1})} {(1+  |\gamma_{1} - \alpha\zeta_{1}^{3} |)^{b}}
		\frac{(1+  | \zeta-\zeta_{2}  |)^{-s} g_{2}(\zeta-\zeta_{2},\gamma-\gamma_{2})} {(1+  |\gamma -\gamma_{2} - \alpha(\zeta-\zeta_{2})^{3} |)^{b}}\\ \\& \quad\times
		\frac{(1+  | \zeta_{2}-\zeta_{1}  |)^{-s} g_{3}(\zeta_{2}-\zeta_{1},\gamma_{2}-\gamma_{1})} {(1+  |\gamma_{2}-\gamma_{1} - \alpha(\zeta_{2}-\zeta_{1})^{3} |)^{b}} d\mu \\ \\&
		=	\int_{\mathbb{R}^{6}} \widehat{A^{\frac{1}{2}} M^{\alpha}_{-b'}}(\zeta,\gamma)   \widehat{A^{\frac{1}{2}} G^{1}_{\alpha,b}}(\zeta_{1},\gamma_{1})
		\widehat{A^{-s} G^{2}_{\alpha,b}}(\zeta-\zeta_{2},\gamma-\gamma_{2})
		\widehat{A^{-s} G^{3}_{\alpha,b}}(\zeta_{2}-\zeta_{1},\gamma_{2}-\gamma_{1})
		d\mu
		\\ \\& =   \int_{\mathbb{R}^{2}}  A^{\frac{1}{2}}  M^{\alpha}_{-b'} \left(  A^{\frac{1}{2}} G^{1}_{\alpha,b}\cdot A^{-s} G^{2}_{\alpha,b}\cdot A^{-s} G^{3}_{\alpha,b} \right) dx dt.
	\end{align*}
	By using Cauchy-Schwarz's inequality  for the variables $x$  and $t$, we get
	\begin{align*}
		& \| \partial_{x} (\varphi_{1}  \varphi_{2} \varphi_{3} ) \|_{X_{\alpha}^{s, b'}}  \leq C  \| A^{\frac{1}{2}} M^{\alpha}_{-b'} \|_{L_{x}^{4} L_{t}^{2} }   \| A^{\frac{1}{2}} G^{1}_{\alpha,b} \|_{L_{x}^{4} L_{t}^{2} }  \| A^{-s}
		G^{2}_{\alpha,b} \|_{L_{x}^{2} L_{t}^{\infty}}  \| A^{\frac{1}{2}} G^{3}_{\alpha,b} \|_{L_{x}^{\infty} L_{t}^{\infty} }.
	\end{align*}
	Hence by Lemmas \ref{2.3} and \ref{2.5}
	\begin{align*}
		\| \partial_{x} (\varphi_{1}  \varphi_{2} \varphi_{3} ) \|_{X_{\alpha}^{s, b'}} & \leq  C  \| m \|_{L_{\zeta}^{2} L_{\gamma}^{2} }   \| g_1 \|_{L_{\zeta}^{2} L_{\gamma}^{2} }  \| g_{2} \|_{L_{\zeta}^{2} L_{\gamma}^{2} }   \| g_{3} \|_{L_{\zeta}^{2} L_{\gamma}^{2} } \\ \\ &
		\leq C\| \varphi_{1}\|_{X_{\alpha}^{s, b}}  \| \varphi_{2} \|_{X_{\alpha}^{s, b}}  \| \varphi_{3} \|_{X_{\alpha}^{s, b}}.
	\end{align*}
	\textbf{Secondly for  the case $  |\zeta| \leq 1 $:} Similar, here we use 
	\begin{align*}
		(1+  | \zeta |)^{s} | \zeta | (1+  | \zeta_{1} |)^{-s}  \leq  C   (1+  | \zeta |)^{^{\frac{1}{2}}} (1+  | \zeta_{1} |)^{^{\frac{1}{2}}}.
	\end{align*}
	\textbf{Case $  k\geq  2 $:} we have 
	\begin{align*}
		\bigg\| \partial_{x}\prod _{i=1}^{2k+1} \varphi_{i} \bigg\|_{X_{\alpha}^{s, b'}} & =  \bigg\|  (1+  | \zeta |)^{s} (1+  |\gamma - \alpha\zeta^{3} |)^{b'} \widehat{ \partial_{x}\prod _{i=1}^{2k+1} \varphi_{i}  }(\zeta, \gamma  ) \bigg\|_{L^{2}_{\zeta} L^{2}_{\gamma} },
		\\ &=  \bigg\|  (1+  | \zeta |)^{s} (1+  |\gamma - \alpha\zeta^{3} |)^{b'} |\zeta|      \widehat{ \prod _{i=1}^{2k+1} \varphi_{i}  }(\zeta, \gamma  ) \bigg\|_{L^{2}_{\zeta} L^{2}_{\gamma} }.
	\end{align*}
	By the same way,  by the inner product, we have
	\begin{align*}
		\bigg\| \partial_{x}\prod _{i=1}^{2k+1} \varphi_{i} \bigg\|_{X_{\alpha}^{s, b'}}	& \leq C  \langle \widehat{A^{\frac{1}{2}} M^{\alpha}_{-b'}}; \widehat{A^{\frac{1}{2}} G^{1}_{\alpha,b}}\star  \widehat{A^{-s} G^{2}_{\alpha,b}}  \star  \prod _{i=3}^{2k+1} \widehat{A^{-s} G^{i}_{\alpha,b}}\rangle \\ &
		\leq C  \langle \widehat{A^{\frac{1}{2}} M^{\alpha}_{-b'}}; \widehat{A^{\frac{1}{2}} G^{1}_{\alpha,b}\cdot A^{-s} G^{2}_{\alpha,b}  \prod _{i=3}^{2k+1} A^{-s} G^{i}_{\alpha,b}}\rangle \\ &
		\leq  C  \| A^{\frac{1}{2}} M^{\alpha}_{-b'} \|_{L_{x}^{4} L_{t}^{2} }   \| A^{\frac{1}{2}} G^{1}_{\alpha,b} \|_{L_{x}^{4} L_{t}^{2} } \| A^{-s} G^{2}_{\alpha,b}  \|_{L_{x}^{2} L_{t}^{\infty}}  \bigg\| \prod _{i=3}^{2k+1}A^{-s}
		G^{i}_{\alpha,b} \bigg\|_{L_{x}^{\infty} L_{t}^{\infty}}\\ &
		\leq  C  \prod _{i=1}^{2k+1}\|  \varphi_{i}  \|_{X_\alpha^{s, b}}.
	\end{align*}
\end{proof}
We choose a function $\varpi$ such that $\varpi(t)=0$ for $t<0$ and $|t|>2$ ,$ \varpi(t)=1$ for $t \in [0, 1]$ , with $\varpi\in C_{0}^{\infty} $. Note that such a $\varpi$  belongs to $H_t^b=H^b([0,T],\mathbb{R})$ for any $b>\frac{1}{2}$, where 
\[\|\varpi\|^2_{H_t^b}=\|\varpi\|^2_{L^2}+\int_{\mathbb{R}^{2}}\dfrac{|\varpi(\eta_1)-\varpi(\eta_2)|^{2}}{|\eta_1-\eta_2|^{1+2b}}d\eta_1d\eta_2.\]
We show the following lemma.

\begin{lemma}[Stochastic convolution]\label{lem2}
	Let $s, b \in \mathbb{R}$, with $b>\frac{1}{2}$, and assume that $\Xi \in L_2^{0, s}$, then $\phi_{l}(t), \varphi_{l}(t)$ defined by (\ref{sto}) satisfies
	$$
	\varpi \phi_{l} \in L^2\left(\Omega, X^{b, s}\right), \quad 	\varpi \varphi_{l} \in L^2\left(\Omega, X_{\alpha}^{b, s}\right)
	$$
	and
	$$
	\mathbb{E}\left(\|\varpi \phi_{l}\|_{X^{b, s}}^2\right) \lesssim_{ b, \varpi}\|\Xi\|_{L_2^{0, s}}^2, \quad \mathbb{E}\left(\|\varpi \varphi_{l}\|_{X_{\alpha}^{b, s}}^2\right) \lesssim_{ b, \varpi}\|\Xi\|_{L_2^{0, s}}^2.
	$$
	
\end{lemma}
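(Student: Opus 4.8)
The strategy is the standard one for stochastic convolutions in Bourgain spaces: push the expectation inside the space--time Fourier norm by Tonelli, and reduce the estimate, frequency by frequency, to the It\^o isometry. Write $W(\gamma)=\sum_i\beta_i(\gamma)e_i$ and, for fixed $\zeta\in\mathbb{R}$, set
\[
\widehat{Z}(t,\zeta)=\sum_i\widehat{\Xi e_i}(\zeta)\int_0^t e^{-i\gamma\zeta^3}\,d\beta_i(\gamma),
\]
so that the spatial Fourier transform of $\phi_l(t)$ in \eqref{sto} is $\widehat{\phi_l}(t,\zeta)=e^{it\zeta^3}\widehat{Z}(t,\zeta)$. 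A direct computation of the full space--time Fourier transform then gives $\widetilde{\varpi\phi_l}(\zeta,\gamma)=\mathcal{F}_t\big[\varpi\,\widehat{Z}(\cdot,\zeta)\big](\gamma-\zeta^3)$, and hence, by Tonelli,
\[
\mathbb{E}\big(\|\varpi\phi_l\|_{X^{b,s}}^2\big)=\int_{\mathbb{R}}(1+|\zeta|)^{2s}\,\mathbb{E}\big(\|\varpi\,\widehat{Z}(\cdot,\zeta)\|_{H_t^b}^2\big)\,d\zeta .
\]
The case of $\varphi_l$ is word for word the same after replacing $\zeta^3$ by $\alpha\zeta^3$; since $|e^{-i\gamma\alpha\zeta^3}|=1$, the constant $\alpha$ plays no role in what follows.

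For the inner expectation I would use the Slobodeckij description of the $H_t^b$ norm, $\|f\|_{H_t^b}^2\approx\|f\|_{L_t^2}^2+\iint|t_1-t_2|^{-1-2b}|f(t_1)-f(t_2)|^2\,dt_1dt_2$, together with two elementary consequences of the It\^o isometry and of the mutual independence of the $\beta_i$: first, $\mathbb{E}\big|\widehat{Z}(t,\zeta)\big|^2=(t\vee 0)\,c(\zeta)$ with $c(\zeta):=\sum_i\big|\widehat{\Xi e_i}(\zeta)\big|^2$; and second, since $Z$ is a martingale with orthogonal increments, $\mathbb{E}\big|\widehat{Z}(t_1,\zeta)-\widehat{Z}(t_2,\zeta)\big|^2\le|t_1-t_2|\,c(\zeta)$. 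The $L_t^2$ part of the Slobodeckij norm then contributes at most $c(\zeta)\int\varpi(t)^2(t\vee 0)\,dt\lesssim_\varpi c(\zeta)$, since $\varpi$ is bounded with compact support. For the double integral one writes, for $t_1\ge t_2$ (the other case being symmetric),
\[
\varpi(t_1)\widehat{Z}(t_1,\zeta)-\varpi(t_2)\widehat{Z}(t_2,\zeta)=\varpi(t_1)\big(\widehat{Z}(t_1,\zeta)-\widehat{Z}(t_2,\zeta)\big)+\big(\varpi(t_1)-\varpi(t_2)\big)\widehat{Z}(t_2,\zeta),
\]
and, since the two terms are built from increments over disjoint time intervals, the cross term disappears after taking $\mathbb{E}|\cdot|^2$. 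What survives is bounded by $c(\zeta)\big[\iint\varpi(t_\vee)^2|t_1-t_2|^{-2b}\,dt_1dt_2+\iint|t_1-t_2|^{-1-2b}|\varpi(t_1)-\varpi(t_2)|^2(t_\wedge\vee 0)\,dt_1dt_2\big]$, with $t_\vee=\max(t_1,t_2)$ and $t_\wedge=\min(t_1,t_2)$. In the second integral $(t_\wedge\vee 0)$ is bounded on the support of $(\varpi(t_1)-\varpi(t_2))^2$ (off that set both arguments lie outside $\operatorname{supp}\varpi$), so that piece is $\lesssim\|\varpi\|_{H_t^b}^2$; the first integral is finite with a value depending only on $b$ and $\varpi$, by the compact support of $\varpi$ and the integrability of the Slobodeckij kernel near the diagonal. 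Altogether $\mathbb{E}\big(\|\varpi\,\widehat{Z}(\cdot,\zeta)\|_{H_t^b}^2\big)\lesssim_{b,\varpi}c(\zeta)$.

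Substituting this back and using the definitions of the $H^s$ and $L_2^{0,s}$ norms,
\[
\mathbb{E}\big(\|\varpi\phi_l\|_{X^{b,s}}^2\big)\lesssim_{b,\varpi}\int_{\mathbb{R}}(1+|\zeta|)^{2s}c(\zeta)\,d\zeta=\sum_i\|\Xi e_i\|_{H^s}^2=\|\Xi\|_{L_2^{0,s}}^2,
\]
which proves the claimed bound and, in particular, that $\varpi\phi_l\in L^2(\Omega,X^{b,s})$; the statement for $\varpi\varphi_l\in L^2(\Omega,X_\alpha^{b,s})$ follows identically. The genuinely delicate step is the control of the Slobodeckij double integral: the frequency-localized process $t\mapsto\widehat{Z}(t,\zeta)$ has Brownian-type increments, so its time modulus of continuity is the borderline one, and the near-diagonal integrability of the kernel $|t_1-t_2|^{-1-2b}$ against these increments is exactly where the admissible range of $b$ and the particular choice of the truncation $\varpi$ come in; the rest is bookkeeping with It\^o's isometry, Tonelli, and the orthogonality of martingale increments.
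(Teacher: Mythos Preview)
Your overall strategy is the paper's: reduce to a frequency-by-frequency $H_t^b$ estimate via the Slobodeckij seminorm, expand the cylindrical Wiener process, and control the double integral with the It\^o isometry together with the same splitting
\[
\varpi(t_1)\widehat Z(t_1)-\varpi(t_2)\widehat Z(t_2)=\varpi(t_1)\big(\widehat Z(t_1)-\widehat Z(t_2)\big)+\big(\varpi(t_1)-\varpi(t_2)\big)\widehat Z(t_2).
\]

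There is, however, a genuine gap in your handling of the first surviving integral. You claim that
\[
\iint \varpi(t_\vee)^2\,|t_1-t_2|^{-2b}\,dt_1\,dt_2
\]
is finite ``by the compact support of $\varpi$ and the integrability of the Slobodeckij kernel near the diagonal,'' but this fails for every $b>0$. The support condition on $\varpi(t_\vee)$ pins only $t_\vee\in[0,2]$; the variable $t_\wedge$ is unrestricted, and $\int_{-\infty}^{t_\vee}|t_\vee-t_\wedge|^{-2b}\,dt_\wedge$ diverges at $-\infty$ when $2b\le 1$ and at the diagonal when $2b\ge 1$. The culprit is the increment bound $\mathbb E|\widehat Z(t_1)-\widehat Z(t_2)|^2\le|t_1-t_2|\,c(\zeta)$, which is wasteful on the region $\{t_\wedge<0\}$: there $\widehat Z(t_\wedge)=0$, so the ``increment'' is just $\widehat Z(t_\vee)$ and $\mathbb E|\widehat Z(t_\vee)|^2=(t_\vee)_+\,c(\zeta)\le 2\,c(\zeta)$, a uniform bound rather than one that grows like $|t_1-t_2|$. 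Splitting off $\{t_\wedge<0\}$ and using this sharper estimate, that piece contributes
\[
\int_0^2\varpi(t_\vee)^2\,t_\vee\int_{-\infty}^{0}|t_\vee-t_\wedge|^{-1-2b}\,dt_\wedge\,dt_\vee
=\frac{1}{2b}\int_0^2\varpi(t_\vee)^2\,t_\vee^{\,1-2b}\,dt_\vee<\infty,
\]
while on $\{0\le t_\wedge\le t_\vee\le 2\}$ your bound gives $\int_0^2\varpi(t_\vee)^2\int_0^{t_\vee}|t_\vee-t_\wedge|^{-2b}\,dt_\wedge\,dt_\vee$, which is finite precisely when $b<\tfrac12$. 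This extra splitting is exactly the role of the paper's term $\mathfrak I_1$, and it also makes explicit the constraint $b<\tfrac12$ that the whole argument genuinely needs---consistent with the hypothesis $b\in(0,\tfrac12)$ in Theorem~\ref{thm1} and with the paper's own use of ``$2b\in(0,1)$'' inside the proof (the condition $b>\tfrac12$ in the lemma's statement is evidently a misprint).
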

\begin{proof}
	Let us introduce the function
	\begin{equation}\label{eq11}
		f(\cdot,t)=\varpi(t) \int_0^t \mathfrak{U}_{\alpha}(-\gamma) \Xi d W(\gamma), \quad t \in \mathbb{R}^{+} .
	\end{equation}

	This implies that $\mathfrak{U}_{\alpha}(t) f(\cdot,t)=\varpi(t) \Psi(t)$. Thus, we have
	$$
	\begin{aligned}
		\mathbb{E}\left(\left\|\varpi \Psi\right\|_{X_{s, b}}^2\right) & =\mathbb{E}\left(\int_{\mathbb{R}^{2}} (1+|\zeta|)^{2 s}(1+|\gamma|)^{2 b}\left|\hat{f}(\zeta,t)\right|^2 d \gamma d \zeta\right) \\
		& =\int_{\mathbb{R}}(1+|\zeta|)^{2 s} \mathbb{E}\left(\left\|\hat{f}(\zeta,\cdot)\right\|_{H_t^b}^2\right) d \zeta,
	\end{aligned}
	$$
	
	According to the expansion $W(t)=\sum_{i=0}^{\infty}\beta_i(t)e_i$ of the cylindrical Wiener process and (\ref{sto})$_{2}$, we have
	$$
	\mathbb{E}\left(\left\|\hat{f}(\zeta,\cdot)\right\|_{H_t^b}^2\right)=\mathfrak{S}_1+\mathfrak{S}_2
	$$
	where,
	$$
	\mathfrak{S}_1=\sum_{i=0}^{\infty}\left|\hat{\Xi e_i} \right|^2\left[\mathbb{E}\left(\left\|\varpi(t) \int_0^t e^{i \gamma \alpha \zeta^{3}} d \beta_i(\gamma)\right\|_{L^2(\mathbb{R})}^2\right)\right],
	$$
	
	$$
	\mathfrak{S}_2=\sum_{i=0}^{\infty}\left|\hat{\Xi e_i}\right|^2\left[\mathbb{E}\left(\int_{\mathbb{R}^{2}}  \frac{\left|\begin{array}{c}
			\varpi\left(\eta_1\right) \int_0^{\eta_1} e^{i \gamma \alpha \zeta^{3}} d \beta_i(\gamma) 
			-\varpi\left(\eta_2\right) \int_0^{\eta_2} e^{i \gamma \alpha \zeta^{3}} d \beta_i(\gamma)
		\end{array}\right|}{\left|\eta_1-\eta_2\right|^{1+2 b}} d \eta_1 d \eta_2\right)\right] .
	$$
	
	From the Itô isometry formula, we have
	$$
	\begin{aligned}
		\mathfrak{S}_1 & =\sum_{i=0}^{\infty}\left|\hat{\Xi e_i}\right|^2 \int_0^2|\varpi(t)|^2 \mathbb{E}\left(\left|\int_0^t e^{i \gamma \alpha \zeta^{3}} d \beta_i(\gamma)\right|^2\right) d t \\
		& =\left\||t|^{\frac{1}{2}} \varpi\right\|_{L_t^2}^2 \sum_{i=0}^{\infty}\left|\hat{\Xi e_i}\right|^2 .
	\end{aligned}
	$$
	
	To estimate $\mathfrak{S}_2$, we get
	
	$$
	\begin{aligned}
		& \mathfrak{S}_2=\sum_{i=0}^{\infty}\left|\hat{\Xi e_i}\right|^2\left[\mathbb{E}\left(\int_{\mathbb{R}^{2}} \frac{\left|\begin{array}{c}
				\varpi\left(\eta_1\right) \int_0^{\eta_1} e^{i \gamma \alpha \zeta^{3}} d \beta_i(\gamma) 
				-\varpi\left(\eta_2\right) \int_0^{\eta_2} e^{i \gamma \alpha \zeta^{3}} d \beta_i(\gamma)
			\end{array}\right|}{\left|\eta_1-\eta_2\right|^{1+2 b}} d \eta_1 d \eta_2\right)\right] \\
		& =2 \sum_{i=0}^{\infty}\left|\hat{\Xi e_i}\right|^2 \int_{\eta_2>0} \int_{\eta_1<\eta_2} \frac{\mathbb{E}\left(\left|\begin{array}{c}
				\varpi\left(\eta_1\right) \int_0^{\eta_1} e^{i \gamma \alpha \zeta^{3}} d \beta_i(\gamma) 
				-\varpi\left(\eta_2\right) \int_0^{\eta_2} e^{i \gamma \alpha \zeta^{3}} d \beta_i(\gamma)
			\end{array}\right|^2\right.}{\left|\eta_1-\eta_2\right|^{1+2 b}} d \eta_1 d \eta_2 \\
		& \leq \sum_{i=0}^{\infty}\left|\hat{\Xi e_i}\right|^2\left[2 \int_{\eta_2>0} \int_{\eta_1<0} \frac{\left|\varpi\left(\eta_2\right)\right|^2 \mathbb{E}\left(\left|\int_0^{\eta_2} e^{i \gamma \alpha \zeta^{3}} d \beta_i(\gamma)\right|^2\right)}{\left|\eta_1-\eta_2\right|^{1+2 b}} d \eta_1 d \eta_2\right. \\
		& \left.+2 \int_{\eta_2>0} \int_{0<\eta_1<\eta_2} \frac{\mathbb{E}\left(\left\lvert\, \begin{array}{c}
				\varpi\left(\eta_1\right) \int_0^{\eta_1} e^{i \gamma \alpha \zeta^{3}} d \beta_i(\gamma) 
				-\varpi\left(\eta_2\right) \int_0^{\eta_1} e^{i \gamma \alpha \zeta^{3}} d \beta_i(\gamma) 
				+\varpi\left(\eta_2\right) \int_{\eta_1}^{\eta_2} e^{i \gamma \alpha \zeta^{3}} d \beta_i(\gamma)
			\end{array}\right.\right)^2}{\left|\eta_1-\eta_2\right|^{1+2 b}} d \eta_1 d \eta_2\right]\\ 
		& \leq \sum_{i=0}^{\infty}\left|\hat{\Xi e_i}\right|^2\left[2 \int_{\eta_2>0} \int_{\eta_1<0} \frac{\left|\varpi\left(\eta_2\right)\right|^2 \mathbb{E}\left(\left|\int_0^{\eta_2} e^{i \gamma \alpha \zeta^{3}} d \beta_i(\gamma)\right|^2\right)}{\left|\eta_1-\eta_2\right|^{1+2 b}} d \eta_1 d \eta_2\right. \\
		& +4 \int_{\eta_2>0} \int_{0<\eta_1<\eta_2} \frac{\left|\varpi\left(\eta_1\right)-\varpi\left(\eta_2\right)\right|^2 \mathbb{E}\left(\left|\int_0^{\eta_1} e^{i \gamma \alpha \zeta^{3}} d \beta_i(\gamma)\right|^2\right)}{\left|\eta_1-\eta_2\right|^{1+2 b}} d \eta_1 d \eta_2 \\
		& \left.+4 \int_{\eta_2>0} \int_{0<\eta_1<\eta_2} \frac{\left|\varpi\left(\eta_2\right)\right|^2 \mathbb{E}\left(\left|\int_{\eta_1}^{\eta_2} e^{i \gamma \alpha \zeta^{3}} d \beta_i(\gamma)\right|^2\right)}{\left|\eta_1-\eta_2\right|^{1+2 b}} d \eta_1 d \eta_2\right] \\
		& =\sum_{i=0}^{\infty}\left|\hat{\Xi e_i}\right|^2\left[I_1+I_2+I_3\right] . \\
		&
	\end{aligned}
	$$
	We now bound $\mathfrak{I}_1, \mathfrak{I}_2$, and $\mathfrak{I}_3$ separately,
	
	$$
	\mathfrak{I}_1 \leq 2 \int_0^2 \eta_1\left|\varpi\left(\eta_2\right)\right|^2 \int_{\eta_1<0} \frac{1}{\left|\eta_1-\eta_2\right|^{1+2 b}} d \eta_1 d \eta_2 \leq \mathfrak{M}_b\left\||t|^{\frac{1}{2}-b} \varpi\right\|_{L_t^2}^2 .
	$$
	
	Using Eq. (\ref{eq11}) and the assumption that $2 b \in(0,1)$, we have
	$$
	\begin{aligned}
		\mathfrak{I}_2 & \leq 4 \int_0^{\infty} \int_0^{\eta_2} \frac{\eta_1\left|\varpi\left(\eta_1\right)-\varpi\left(\eta_2\right)\right|^2}{\left\|\eta_1-\eta_2\right\|^{1+2 b}} d \eta_1 d \eta_2 \\
		& \leq 4 \int_0^2 \int_0^{\eta_2} \frac{\eta_1\left|\varpi\left(\eta_1\right)-\varpi\left(\eta_2\right)\right|^2}{\left\|\eta_1-\eta_2\right\|^{1+2 b}} d \eta_1 d \eta_2 \\
		& +4 \int_{2}^{\infty} \int_0^2 \frac{\eta_1\left|\varpi\left(\eta_1\right)\right|^2}{\left\|\eta_1-\eta_2\right\|^{1+2 b}} d \eta_1 d \eta_2 \\
		& \leq 8\|\varpi\|_{H_t^b}^2+4\left\||t|^{\frac{1}{2}} \varpi\right\|_{L_t^{\infty}}^2 \int_0^{\infty} \int_0^2 \frac{1}{\left|\eta_1-\eta_2\right|^{1+2 b}} d \eta_1 d \eta_2 \\
		& \leq 8\|\varpi\|_{H_t^b}^2+\mathfrak{M}_b\left\||t|^{\frac{1}{2}} \varpi\right\|_{L_t^{\infty}}^2 .
	\end{aligned}
	$$
	
	Similarly,
	$$
	\mathfrak{I}_3 \leq 4 \int_0^2 \int_0^{\eta_2} \frac{\left|\varpi\left(\eta_2\right)\right|^2}{\left|\eta_1-\eta_2\right|^{2 b}} d \eta_1 d \eta_2 \leq \mathfrak{M}_b\left\||t|^{\frac{1}{2}-b} \varpi\right\|_{L_t^2}^2 .
	$$
	
	So, we have
	$$
	\mathbb{E}\left(\left\|\hat{f}(\zeta,\cdot)\right\|_{H_t^b}^2\right) \leq \mathfrak{K}(b, \varpi) \sum_{i=0}^{\infty}\left|\hat{\Xi e_i}\right|^2
	$$
	where $\mathfrak{K}(b, \varpi)=\mathfrak{M}_b\left(\|\varpi\|_{H_t^b}+\left\||t|^{\frac{1}{2}} \varpi\right\|_{L_t^2}+\left\||t|^{\frac{1}{2}} \varpi\right\|_{L_t^{\infty}}\right)$.
\end{proof}
\section{Local well-posedness} 
Using the stochastic estimates from the previous section and the Banach fixed-point theorem, we now establish a local well-posedness result for system (\ref{p1}). This section is therefore dedicated to the proof of Theorem \ref{thm1}. Let
\[\varPi_1(t)=\mathfrak{U}(t)u_0,\qquad \varPi_2(t)=\mathfrak{U}_\alpha(t)\varphi_0,\]
\[\Upsilon_1(t)=\int_{0}^{t}\mathfrak{U}(t-\gamma)\Xi dW(\gamma), \qquad \Upsilon_2(t)=\int_{0}^{t}\mathfrak{U}_{\alpha}(t-\gamma)\Xi dW(\gamma),\]
and
\[\Psi_1(t)=\int_{0}^{t} \mathfrak{U}(t-\gamma)\partial_{x}(N_\phi(\phi,\varphi))(\gamma) d\gamma,\qquad \Psi_2(t)=\int_{0}^{t} \mathfrak{U}_{\alpha}(t-\gamma)\partial_{x}(N_\varphi(\phi,\varphi))(\gamma) d\gamma.\]
Then, we may rewrite (\ref{int2}) in terms of 
$$\Psi_1=\phi- \Upsilon_1-\varPi_1,$$
$$\Psi_2=\varphi- \Upsilon_2-\varPi_2,$$
as

\begin{equation}\label{sys1s}
	\left\{
	\begin{array}{ll}
		\Psi_1(t)=\dfrac{1}{2}\int_{0}^{t} \mathfrak{U}(t-\gamma)\partial_{x}(N_\phi(\phi,\varphi))(\gamma) d\gamma\\ \\
		\Psi_2(t)=\int_{0}^{t} \mathfrak{U}_{\alpha}(t-\gamma)\partial_{x}(N_\varphi(\phi,\varphi))(\gamma) d\gamma,
	\end{array}
	\right.
\end{equation}
with
\begin{equation}\label{sys1s1}
	\left\{\begin{array}{l}
		N_\phi(\phi, \varphi)=A \phi^{2 k+1}+B \phi^k \varphi^{k+1}+\frac{k+2}{k} C \phi^{k+1} \varphi^k+D \phi^{k-1} \varphi^{k+2} \\ \\
		N_\varphi(\phi, \varphi)=A \varphi^{2 k+1}+B \varphi^k \phi^{k+1}+\frac{k+2}{k} D \varphi^{k+1} \phi^k+C \varphi^{k-1} \phi^{k+2},
	\end{array}\right.
\end{equation}
then (\ref{sys1s})-(\ref{sys1s1})  is equivalent to
\begin{equation}\label{sys2s}
	\left\{
	\begin{array}{ll}
		\Psi_1(t)&=\dfrac{1}{2}\int_{0}^{t} \mathfrak{U}(t-\gamma)\partial_{x}\bigg(A (\Psi_1+\Upsilon_1+\varPi_1)^{2 k+1}+B (\Psi_1+\Upsilon_1+\varPi_1)^k (\Psi_2+\Upsilon_2+\varPi_2)^{k+1}
		\\ \\
		&+\frac{k+2}{k} C (\Psi_1+\Upsilon_1+\varPi_1)^{k+1} (\Psi_2+\Upsilon_2+\varPi_2)^k+D (\Psi_1+\Upsilon_1+\varPi_1)^{k-1} (\Psi_2+\Upsilon_2+\varPi_2)^{k+2}\bigg)(\gamma) d\gamma 	\\ \\
		\Psi_2(t)&=\int_{0}^{t} \mathfrak{U}_{\alpha}(t-\gamma)\partial_{x}\bigg(A (\Psi_2+\Upsilon_2+\varPi_2)^{2 k+1}+B (\Psi_2+\Upsilon_2+\varPi_2)^k (\Psi_1+\Upsilon_1+\varPi_1)^{k+1}
		\\ \\
		&+\frac{k+2}{k} D (\Psi_2+\Upsilon_2+\varPi_2)^{k+1} (\Psi_1+\Upsilon_1+\varPi_1)^k+C (\Psi_2+\Upsilon_2+\varPi_2)^{k-1} (\Psi_1+\Upsilon_1+\varPi_1)^{k+2}\bigg)(\gamma) d\gamma ,
	\end{array}
	\right.
\end{equation}
Next, we define the ball $ \mathcal{B}_{\mathcal{R},T} $, by
\begin{equation}
	\mathcal{B}_{\mathcal{R},T}=\{(\Psi_1,\Psi_2):\|(\Psi_1,\Psi_2)\|_{\mathcal{X}^{b, s}_{\alpha,T}}\leq \mathcal{R}, \quad \mathcal{R}>0\}.\nonumber
\end{equation}
The goal of this section is thus to prove that the mapping $(\Psi_1(t),\Psi_2(t))$ is a contraction on $\mathcal{B}_{\mathcal{R},T}$.
According to Lemmas \ref{lem1}, \ref{tri}, and \ref{lem2}, we obtain

\begin{eqnarray*}
	\|\varGamma_1(\Psi_1(t))\|_{X^{b, s}_{T}}&\lesssim& T^{1-b+b^{\prime}}\left(\mathcal{R}^{2k+1}+\|(\Upsilon_1,\Upsilon_2)\|^{2k+1}_{\mathcal{X}^{b, s}_{\alpha, T}}+\|(\phi_0, \varphi_0)\|^{2}_{\mathcal{H}^{ s }} \right),\\ \\ \|\varGamma_2(\Psi_2(t))\|_{X^{b, s}_{\alpha, T}}&\lesssim& T^{1-b+b^{\prime}}\left(\mathcal{R}^{2k+1}+\|(\Upsilon_1,\Upsilon_2)\|^{2k+1}_{\mathcal{X}^{b, s}_{\alpha, T}}+\|(\phi_0, \varphi_0)\|^{2}_{\mathcal{H}^{ s }} \right),
\end{eqnarray*}
so
\begin{eqnarray*}
	\|(\varGamma_1(\Psi_1(t)),\varGamma_2(\Psi_2(t)))\|_{\mathcal{X}^{b, s}_{\alpha, T}}&\lesssim& T^{1-b+b^{\prime}}\left(\mathcal{R}^{2k+1}+\|(\Upsilon_1,\Upsilon_2)\|^{2k+1}_{\mathcal{X}^{b, s}_{\alpha, T}}+\|(\phi_0, \varphi_0)\|^{2}_{\mathcal{H}^{ s }} \right),
\end{eqnarray*}
Therefore, for $(\Psi_{1.1},\Psi_{2.1}),(\Psi_{1.2},\Psi_{2.2})\in \mathcal{B}_{\mathcal{R},T} $ , we get
\begin{eqnarray*}
	\|\varGamma_1(\Psi_{1.1}-\Psi_{1.2})\|_{X^{b, s}_{T}}&\lesssim& T^{1-b+b^{\prime}}\left(\mathcal{R}^{2k}+\|(\Upsilon_1,\Upsilon_2)\|^{2k}_{\mathcal{X}^{b, s}_{\alpha, T}}+\|(\phi_0, \varphi_0)\|_{\mathcal{H}^{ s }} \right)\\ \\ &&\times\|(\Psi_{1.1}-\Psi_{1.2},\Psi_{2.1}-\Psi_{2.2})\|_{\mathcal{X}^{b, s}_{\alpha, T}},\\ \\ \|\varGamma_2(\Psi_{2.1}-\Psi_{2.2})\|_{X^{b, s}_{\alpha, T}}&\lesssim& T^{1-b+b^{\prime}}\left(\mathcal{R}^{2k}+\|(\Upsilon_1,\Upsilon_2)\|^{2k}_{\mathcal{X}^{b, s}_{\alpha, T}}+\|(\phi_0, \varphi_0)\|_{\mathcal{H}^{ s }} \right)\\ \\ &&\times\|(\Psi_{1.1}-\Psi_{1.2},\Psi_{2.1}-\Psi_{2.2})\|_{\mathcal{X}^{b, s}_{\alpha, T}},
\end{eqnarray*}
so
\begin{eqnarray*}
	\|(\varGamma_1(\Psi_{1.1}-\Psi_{1.2}),\varGamma_2(\Psi_{2.1}-\Psi_{2.2}))\|_{X^{b, s}_{\alpha, T}}&\lesssim& T^{1-b+b^{\prime}}\left(\mathcal{R}^{2k}+\|(\Upsilon_1,\Upsilon_2)\|^{2k}_{\mathcal{X}^{b, s}_{\alpha, T}}+\|(\phi_0, \varphi_0)\|_{\mathcal{H}^{ s }} \right)\\ \\ &&\times\|(\Psi_{1.1}-\Psi_{1.2},\Psi_{2.1}-\Psi_{2.2})\|_{\mathcal{X}^{b, s}_{\alpha, T}}.
\end{eqnarray*}

Let us choose $T_{\varrho }$ such that
\[4CT^{1-b+b^{\prime}}\left(\mathcal{R}^{2k}_{\varrho}+\|(\Upsilon_1,\Upsilon_2)\|^{2k}_{\mathcal{X}^{b, s}_{\alpha, T}}+\|(\phi_0, \varphi_0)\|_{\mathcal{H}^{ s }}  \right)\leq 1,\]

where $$\mathcal{R}_{\varrho }=2C\left(\|(\Upsilon_1,\Upsilon_2)\|^{2k+1}_{\mathcal{X}^{b, s}_{\alpha, T}}+\|(\phi_0, \varphi_0)\|^{2}_{\mathcal{H}^{ s }} \right).$$ One can easily verify that $(\varGamma_1,\varGamma_2)$ maps $\mathcal{B}_{\mathcal{R},T}$ into itself and is a strict contraction on $\mathcal{B}_{\mathcal{R},T}$ with respect to the norm $\|(\Psi_{1},\Psi_{2})\|_{\mathcal{X}^{b, s}_{\alpha, T}}$,

\begin{eqnarray*}
	\|(\varGamma_1(\Psi_{1.1}-\Psi_{1.2}),\varGamma_2(\Psi_{2.1}-\Psi_{2.2}))\|_{X^{b, s}_{\alpha, T}}&\leq& \dfrac{1}{4}\|(\Psi_{1.1}-\Psi_{1.2},\Psi_{2.1}-\Psi_{2.2})\|_{\mathcal{X}^{b, s}_{\alpha, T}}.
\end{eqnarray*}

Hence, $(\varGamma_1,\varGamma_2)$ has a unique fixed point in $\mathcal{X}^{b, s}_{\alpha, T}$, which is a solution of (\ref{sys2s}) on $[0,T_{\varrho}]$. \\
Now observe that 

\begin{equation}
	\left\{
	\begin{array}{ll}
		\mathfrak{U}(t)=\varPi_1(t)+\Psi_1(t)+\Upsilon_1(t)\in X^{b', s}_{T_{\varrho}}+X^{b, s}_{T_{\varrho}}\\ \\
		\mathfrak{U}_\alpha(t)=\varPi_2(t)+\Psi_2(t)+\Upsilon_2(t)\in X^{b', s}_{\alpha, T_{\varrho}}+X^{b, s}_{\alpha, T_{\varrho}}
	\end{array}
	\right.
\end{equation}
We complete the proof by showing that $(\phi,\varphi) \in C([ 0, T_{\varrho}] , H^{s}(\mathbb{R}))\times  C([ 0, T_{\varrho}] , H^{s}(\mathbb{R}))$. Recall that $b' < \frac{1}{2}$ and $b> \frac{1}{2}$. By  the Sobolev embedding theorem, we have $(\varPi_1,\varPi_2) \in C([ 0, T_{\varrho}] , H^{s}(\mathbb{R}))\times  C([ 0, T_{\varrho}] , H^{s}(\mathbb{R}))$. \\
We need the following theorem
\begin{theorem}[\cite{ref5}]\label{the5}
	Assume that $\mathcal{A}$ generates a contraction semigroup and $\Xi \in \mathcal{N}_\mathcal{W}^2\left([0, T], L_2^{0,s}\right)$. Then the process $\mathcal{W}_\mathcal{A}^{\Xi}(\cdot)$ has a continuous modification and there exists a constant $C$ such that
	$$
	\mathbb{E}\left(\sup_{\gamma \in[0, t]}{ }\left|\mathcal{W}_\mathcal{A}^{\Xi}(\gamma)\right|^2 \right) \leq C \mathbb{E}\left(\int_0^t\|\Xi(\gamma)\|_{L_2^0}^2 d \gamma \right) , \quad t \in[0, T] .
	$$	
	
\end{theorem}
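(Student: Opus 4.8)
The plan is to prove this along classical lines for stochastic convolution estimates: the factorization method of Da Prato--Kwapie\'n--Zabczyk to produce the continuous modification, and, in the spirit of Kotelenez, an It\^o-formula argument for the sharp second-moment bound. Write $S_{\mathcal{A}}(\cdot)$ for the contraction semigroup generated by $\mathcal{A}$, so that $\mathcal{W}_{\mathcal{A}}^{\Xi}(t)=\int_0^tS_{\mathcal{A}}(t-\sigma)\Xi(\sigma)\,dW(\sigma)$. Fix an exponent $p>2$ and a parameter $\alpha\in(\tfrac1p,\tfrac12)$. Using the Euler beta identity $\int_\sigma^t(t-r)^{\alpha-1}(r-\sigma)^{-\alpha}\,dr=\pi/\sin(\pi\alpha)$, the semigroup law $S_{\mathcal{A}}(t-\sigma)=S_{\mathcal{A}}(t-r)S_{\mathcal{A}}(r-\sigma)$, and a stochastic Fubini theorem, one factors
\[
\mathcal{W}_{\mathcal{A}}^{\Xi}(t)=\frac{\sin(\pi\alpha)}{\pi}\int_0^t(t-r)^{\alpha-1}S_{\mathcal{A}}(t-r)\,Y_\alpha(r)\,dr,\qquad
Y_\alpha(r):=\int_0^r(r-\sigma)^{-\alpha}S_{\mathcal{A}}(r-\sigma)\Xi(\sigma)\,dW(\sigma).
\]

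Then I would run the two standard estimates. First, by the Burkholder--Davis--Gundy inequality in the Hilbert space $H^s$, the contractivity $\|S_{\mathcal{A}}(\cdot)\|\le1$, and the integrability of $\sigma\mapsto(r-\sigma)^{-2\alpha}$ (which uses $2\alpha<1$), Fubini gives $\mathbb{E}\int_0^T\|Y_\alpha(r)\|_{H^s}^p\,dr\lesssim\bigl(\mathbb{E}\int_0^T\|\Xi(\sigma)\|_{L_2^{0,s}}^2\,d\sigma\bigr)^{p/2}$, so $Y_\alpha\in L^p(\Omega\times[0,T];H^s)$. Second, the deterministic operator $R_\alpha g(t)=\int_0^t(t-r)^{\alpha-1}S_{\mathcal{A}}(t-r)g(r)\,dr$ maps $L^p([0,T];H^s)$ continuously into $C([0,T];H^s)$ whenever $\alpha p>1$, which follows from H\"older's inequality with exponents $p$ and $p/(p-1)$, the integrability of $r\mapsto r^{(\alpha-1)p/(p-1)}$, and the strong continuity of $S_{\mathcal{A}}$. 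Composing the two gives a modification of $\mathcal{W}_{\mathcal{A}}^{\Xi}$ in $C([0,T];H^s)$, together with the maximal inequality in $L^p$ for every $p>2$.

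To obtain the sharp exponent $2$ in the displayed inequality, I would instead regularize: replace $\mathcal{A}$ by its Yosida approximation $\mathcal{A}_\lambda$ (bounded, and still dissipative since $\mathcal{A}$ generates a contraction semigroup), so that $Z_\lambda:=\mathcal{W}_{\mathcal{A}_\lambda}^{\Xi}$ solves the It\^o equation $dZ_\lambda=\mathcal{A}_\lambda Z_\lambda\,dt+\Xi\,dW$, $Z_\lambda(0)=0$. Applying It\^o's formula to $\|Z_\lambda(t)\|_{H^s}^2$, dropping the drift contribution because $\mathrm{Re}\langle\mathcal{A}_\lambda x,x\rangle\le0$, taking the supremum over $[0,t]$ and then expectations, and estimating the surviving stochastic integral by Burkholder--Davis--Gundy followed by Young's inequality $ab\le\tfrac14a^2+b^2$, the term $\mathbb{E}\sup_{[0,t]}\|Z_\lambda\|_{H^s}^2$ is absorbed into the left-hand side and one is left with a bound by $C\,\mathbb{E}\int_0^t\|\Xi(\gamma)\|_{L_2^{0,s}}^2\,d\gamma$ uniform in $\lambda$. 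Since $Z_\lambda(t)\to\mathcal{W}_{\mathcal{A}}^{\Xi}(t)$ in $L^2(\Omega;H^s)$ for each fixed $t$ (It\^o isometry together with $S_{\mathcal{A}_\lambda}(t)\to S_{\mathcal{A}}(t)$ strongly and dominated convergence), and since we already know $\mathcal{W}_{\mathcal{A}}^{\Xi}$ has a continuous modification, a Fatou-type argument over a countable dense set of times $D\subset[0,t]$ --- first passing $\lambda\to\infty$ for finite subsets of $D$, then letting the subsets exhaust $D$ --- transfers the uniform bound to $\mathcal{W}_{\mathcal{A}}^{\Xi}$, which is the claim.

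The main obstacle is the tension between the two singular kernels in the factorization: $\alpha$ has to be smaller than $\tfrac12$ for $Y_\alpha$ to have finite $p$-th moments, yet larger than $\tfrac1p$ for $R_\alpha$ to land in $C([0,T];H^s)$, and these are compatible only for $p>2$; this is precisely why the factorization argument by itself does not reach the endpoint exponent $2$ and the It\^o/Yosida computation is needed, where in turn the care lies in justifying the stochastic Fubini theorem, the It\^o formula and dissipativity estimate for the bounded generator $\mathcal{A}_\lambda$, and the transfer of the supremum bound through the limit $\lambda\to\infty$.
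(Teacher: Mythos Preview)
The paper does not prove this theorem at all: it is stated with the citation \cite{ref5} (Da Prato--Zabczyk) and then immediately applied, with no proof environment following the statement. Your proposal is therefore not to be compared against a proof in the paper but against the classical argument in the cited reference, and in that respect it is correct and standard: the factorization method with parameter $\alpha\in(1/p,1/2)$ and $p>2$ is exactly how Da Prato--Zabczyk obtain the continuous modification and the $L^p$ maximal inequality, and the Yosida-approximation/It\^o-formula route (due essentially to Kotelenez and Tubaro, and reproduced in Da Prato--Zabczyk) is the standard way to reach the endpoint $p=2$ stated here. Your identification of the obstruction --- that factorization alone cannot reach $p=2$ because the two constraints on $\alpha$ become incompatible --- is also correct and is precisely why the second argument is needed.
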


Under the condition that $\Xi\in L^{0,s}_2 $ and the  fact that $\mathfrak{U}(t)$ and  $\mathfrak{U}_{\alpha}(t)$ are a unitary group in $H^{s}(\mathbb{R})$, an application of the Theorem \ref{the5}  implies  that $(\Upsilon_1,\Upsilon_2) \in C([ 0, T_{\varrho}] , H^{s}(\mathbb{R}))\times  C([ 0, T_{\varrho}] , H^{s}(\mathbb{R}))$.\\
By Lemma \ref{tri}, we have $\partial_x (N_\phi(\tilde{\phi}, \tilde{\varphi}))\in X^{s,b'}$ and $\partial_x (G(\tilde{\phi}, \tilde{\varphi})) \in X_{\alpha}^{s,b'}$ for any extension $\tilde{\phi}$ of $\phi$ in $X^{s,b'}+X^{s,b}$ and $\tilde{\varphi}$ of $\varphi$ in $X_{\alpha}^{s,b'}+X_{\alpha}^{s,b}$, where $-\frac{1}{2}<b^{\prime} \leq 0 \leq b<b^{\prime}+1$, and applying Lemma 3.2 in \cite{13}, we get
$$
\left\|\int_0^t \mathfrak{U}(t-\gamma)\partial_x(N_\phi(\tilde{\phi}, \tilde{\varphi}))d\gamma\right\|_{X^{s, b}} \leq C\left\|\partial_x(N_\phi(\tilde{\phi}, \tilde{\varphi}))\right\|_{X^{s,b'}} .
$$
$$
\left\|\int_0^t \mathfrak{U}_\alpha(t-\gamma)\partial_x (N_\varphi(\tilde{\phi}, \tilde{\varphi}))d\gamma\right\|_{X_{\alpha}^{s, b}} \leq C\left\|\partial_x(N_\varphi(\tilde{\phi}, \tilde{\varphi}))\right\|_{X_{\alpha}^{s,b'}} .
$$

Since $1+b'>\frac{1}{2}$, then $$(\tilde{\phi},\tilde{\varphi}) \in \mathcal{X}^{1+b', s}_{\alpha} \subset C([ 0, T_{\varrho}] , H^{s}(\mathbb{R}))\times  C([ 0, T_{\varrho}] , H^{s}(\mathbb{R})).$$

\end{document}